



 \documentclass[final,3p,times,10pt]{elsarticle}

\usepackage[dvipsnames]{xcolor}


\usepackage{graphicx}
\usepackage{epsfig}

\usepackage{amsmath}
\usepackage{amssymb}
\usepackage{amsthm}
\usepackage{amstext}

\usepackage{tikz}
\usetikzlibrary{arrows}
\usetikzlibrary{datavisualization.formats.functions}
\usetikzlibrary{matrix,calc}

\usepackage{bbm}
\usepackage{subcaption}
\usepackage{enumerate}
\usepackage{multirow}
\usepackage{natbib}
\usepackage{url}

\newtheorem{definition}{Definition}
\newtheorem{thm}{Theorem}
\newtheorem{lem}{Lemma}
\newtheorem{cor}{Corollary}
\newtheorem{ex}{Example}
\newtheorem{prop}{Proposition}
\newtheorem{rem}{Remark}

\newtheorem*{problem*}{Problem}
\newtheorem*{assumption*}{Assumption}
\newtheorem*{exs*}{Examples}
\newtheorem*{ag*}{Assumptions and Goal}
\newtheorem*{ag2*}{Assumption}

\usepackage{subcaption}

\usepackage{bbm}
\usepackage{bm}
\usepackage{pgfplots}
\pgfplotsset{compat=newest}

\pgfmathdeclarefunction{gauss}{2}{%
	\pgfmathparse{1/(#2*sqrt(2*pi))*exp(-((x-#1)^2)/(2*#2^2))}%
}






\usepackage{lipsum}
\usepackage{mathtools}
\usepackage{cancel}
\usepackage{enumerate}
\usepackage{setspace}

\usepackage{hyperref}

\usepackage{scalerel,stackengine}
\stackMath
\newcommand\reallywidehat[1]{%
	\savestack{\tmpbox}{\stretchto{%
			\scaleto{%
				\scalerel*[\widthof{\ensuremath{#1}}]{\kern-.6pt\bigwedge\kern-.6pt}%
				{\rule[-\textheight/2]{1ex}{\textheight}}
			}{\textheight}%
		}{0.75ex}}%
	\stackon[2pt]{#1}{\tmpbox}%
}

\journal{NAME OF JOURNAL}

\allowdisplaybreaks
\begin{document}

\onehalfspacing

\begin{frontmatter}

\title{Non-ergodic statistics and spectral density estimation for stationary real harmonizable symmetric $\alpha$-stable processes}

\author{Ly Viet Hoang}
\ead{ly.hoang@uni-ulm.de}
\author{Evgeny Spodarev}
\ead{evgeny.spodarev@uni-ulm.de}
\address{Ulm University}

%
%
%
%
%

\begin{abstract}
We consider non-ergodic class of stationary real harmonizable symmetric $\alpha$-stable processes $X=\left\{X(t):t\in\mathbb{R}\right\}$ with a finite symmetric and absolutely continuous control measure. We refer to its density function as the spectral density of $X$. 
These processes admit a LePage series representation and are conditionally Gaussian, which allows us to derive the non-ergodic limit of sample functions on $X$. In particular, we give an explicit expression for the non-ergodic limits of the empirical characteristic function of $X$ and the lag process $\left\{X(t+h)-X(t):t\in\mathbb{R}\right\}$ with $h>0$, respectively. 
The process admits an equivalent representation as a series of sinusoidal waves with random frequencies which are i.i.d. with the (normalized) spectral density of $X$ as their probability density function. 
Based on strongly consistent frequency estimation using the periodogram we present a strongly consistent estimator of the spectral density.
The periodogram's computation is fast and efficient, and our method is not affected by the non-ergodicity of $X$.
\end{abstract}

\begin{keyword}
Fourier analysis \sep
frequency estimation\sep
harmonizable process\sep
non-ergodic process\sep
non-ergodic statistics\sep
periodogram\sep
spectral density estimation\sep
stable process\sep
stationary process

\end{keyword}

\end{frontmatter}



\section{Introduction}
Stationarity is often a key property in the analysis of dependence structures of time series or more generally stochastic processes. 
Especially stationary Gaussian processes have been extensively studied \cite{doob, dym, lindgren}. 
It is well known that $\alpha$-stable processes, where $\alpha\in(0,2]$ is the so-called index of stability, generalize Gaussian processes \cite{gennady}. In particular, symmetric $\alpha$-stable ($S\alpha S$) processes are of interest.  
It is the infinite second moment of $\alpha$-stable distributions (in the case $\alpha<2$) which sets them apart from Gaussian processes ($\alpha=2$), and allows them to be used for example in models of heavy-tailed random phenomena. 

It can be shown that stationary $S\alpha S$ processes fall into one of the following three subclasses -- $S\alpha S$ moving average processes, harmonizable $S\alpha S$ processes and a third class, which consists of processes characterized by a non-singular conservative flow and the corresponding cocycle \cite{rosinski}. 
The classes of moving averages and harmonizable processes are disjoint when the index of stability $\alpha$ is less then $2$. 
Only in the case $\alpha=2$, i.e. in the Gaussian case, one may find both a moving average representation and a harmonizable representation for the same process \cite[Theorem 6.7.2]{gennady}. 

Cambanis et al \cite{cambanis1} studied ergodic properties of stationary stable processes and proved that contrary to stable moving averages the harmonizable stable processes are non-ergodic .
As a consequence, the latter has garnered little attention from a statistical point of view, as classical estimation methods that rely on empirical functions are unfeasible for non-ergodic processes. 

To date, mainly the special subclass of harmonizable fractional stable motions, which are a generalization of fractional Brownian motions and belong to the class of stable self-similar processes, has been in the focus of the study of harmonizable stable processes. 
These processes play an important role in probability theory as they appear in connection with limit theorems \cite{lamperti}, as well as in stochastic modeling since the self-similarity and fractal-like behavior are features observable in real world phenomena. 
Structural, probabilistic and sample path properties have been studied in recent works \cite{charfracstable,xiao,basse,bierme} but their statistical inference tools have been missing so far. This is mainly due to the aforementioned non-ergodicity of the processes. 

Harmonizable fractional stable motions are characterized solely by their Hurst parameter and the index of stability.
The underlying stable random measure's control measure is infinite, more precisely it is the Lebesgue measure on $\mathbb{R}$. 

As a first step towards the statistical analysis of harmonizable stable processes with infinite control measure, such as the fractional stable motions, 
we consider the class of stationary harmonizable stable processes with finite control measure and integrable symmetric spectral density $f$. 
This is motivated by the fact that our case is equivalent to consider a harmonizable stable process with Lebesgue control measure and weighted kernel function $e^{itx}f^{1/\alpha}(x)$.

The class of stationary real harmonizable symmetric $S\alpha S$ (SRH $S\alpha S$) processes $X=\{X(t):t\in\mathbb{R}\}$ with index of stability $0<\alpha<2$ is defined by 
\begin{align*}
	X(t)=Re\left(\int_{\mathbb{R}}e^{itx}M(dx)\right),
\end{align*}
where $M$ is an isotropic complex $S\alpha S$ random measure with circular control measure $k$. 
This measure is a product measure on the space $(\mathbb{R}\times S^1, \mathcal{B}(\mathbb{R})\times\mathcal{B}(S^1))$ and admits the form $k=m\cdot\gamma$. The measure $m$ is called the \emph{control measure }of $M$ and $\gamma$ is the uniform probability measure on $S^1$. 
The finite dimensional distributions of the process $X$ are determined by $m$, hence the process $X$ is completely characterized by $m$ as well.
We assume that the control measure $m$ is an absolutely continuous symmetric probability measure on $\mathbb{R}$ with symmetric probability density function $f$, which we refer to as the \emph{spectral density} of the process $X$. 
It can be easily verified that the above integral representation is equivalent to 
\begin{align*}
	X(t)=Re\left(\int_{\mathbb{R}}e^{itx}f^{1/\alpha}\widetilde{M}(dx)\right),
\end{align*}
where $\widetilde{M}$ is a isotropic complex $S\alpha S$ random measure with Lebesgue control measure $m(dx)=dx$ on $\mathbb{R}$ and $f$ is a symmetric integrable function on $\mathbb{R}$.

In this paper, an approach for the estimation of the spectral density based on classical methods from spectral analysis and signal processing \cite{brockwell, hannan} is presented. 
Furthermore, we examine the asymptotic behaviour of time-averages of observables on SRH $S\alpha S$ processes and derive an ergodic theorem.  
This allows us to give an explicit expression for the non-ergodic limit of the finite-dimensional empirical characteristic function of the process $X$. 

First, Section \ref{prelim} establishes the basics on SRH $S\alpha S$ processes. 
An in-depth definition of the process as well as its properties are given. 
In particular, the non-ergodicity of SRH $S\alpha S$ processes is discussed. 
We also give a short introductory example involving $\alpha$-sine transforms on $\mathbb{R}$, and how their inversion can be used to estimate the spectral density. 

In Section \ref{nonerglimit}, the non-ergodicity of SRH $S\alpha S$ processes is examined in more detail. 
From a LePage-type series representation it follows that SRH $S\alpha S$ processes are conditionally Gaussian, albeit still non-ergodic. 
This underlying Gaussian structure can be used to study the non-ergodic behaviour of time-averages of observables of $X$.
In particular, explicit expressions for the non-ergodic limit of the finite-dimensional empirical characteristic functions of the process $X$ and the lag process $\{X(t+h)-X(t):t\in\mathbb{R}\}$ are derived. 

The fourth section dives deeper in the underlying Gaussian structure of SRH $S\alpha S$ processes. 
As a consequence of the LePage type series representation, SRH $S\alpha S$ processes can be generated by a series of sinusoidal waves with random amplitudes, phases and frequencies. 
The frequencies are independently and identically distributed with the spectral density $f$ as their probability density function. 
The periodogram is a standard tool for the estimation of frequencies in signal theory. The locations of the peaks in the periodogram are strongly consistent estimators of the aforementioned frequencies, and in
conjunction with kernel density estimators they can be used to estimate the spectral density $f$ of the SRH $S\alpha S$ process $X$.
Under minimal assumptions on the kernel function the strong consistency of the frequency estimators translates to strong or weak consistency of the kernel density estimator for $f$.

In the final section of this paper we present a thorough numerical analysis and a collection of numerical examples. 
Furthermore, some minor numerical challenges of the spectral density estimation are discussed. 
The \texttt{Matlab} and \texttt{R} implementation of our inference method can be downloaded at \cite{code}.

\section{Preliminaries}
\label{prelim}

Consider the probability space $(\Omega,\mathcal{F},P)$, and denote by $L^0(\Omega)$ the space of real-valued random variables on this probability space. 
Furthermore, define the space of complex-valued random variables on  $(\Omega,\mathcal{F},P)$ by $L^0_c(\Omega)=\left\{X=X_1+iX_2: X_1,X_2\in L^0(\Omega)\right\}.$
A real-valued random variable $X\in L^0(\Omega)$ is said to be \emph{symmetric $\alpha$-stable} if its characteristic function is of the form
\begin{align*}
	\mathbb{E}\left[e^{isX}\right]=\exp\left\{-\sigma^\alpha\vert s\vert^\alpha\right\},
\end{align*}
where $\sigma>0$ is called the \emph{scale parameter} of $X$ and $\alpha\in(0,2]$ its \emph{index of stability}. 
We write $X\sim S\alpha S(\sigma)$. 
In the multivariate case, a real-valued symmetric $\alpha$-stable random vector $\bm{X}=(X_1,\dots,X_n)$ is defined by its joint characteristic function
\begin{align*}
	\mathbb{E}\left[\exp\left\{i(\bm{s},\bm{X})\right\}\right]=\exp\left\{-\int_{S^{n-1}}\left\vert(\bm\theta,\bm{s})\right\vert^\alpha\Gamma(d\bm{\theta})\right\},
\end{align*}
where $(\bm{x},\bm{y})$ denotes the scalar product of two vectors $\bm{x},\bm{y}\in\mathbb{R}^n$, and $S^{n-1}$ is the unit sphere in $\mathbb{R}^n$. 
The measure $\Gamma$ is called the \emph{spectral measure} of $\bm{X}$. It is unique, finite and symmetric for $0<\alpha<2$ \cite[Theorem 2.4.3]{gennady}. 
A random variable $X\in L^0_c(\Omega)$ has \emph{complex symmetric $\alpha$-stable} distribution if its real and imaginary parts form a $S\alpha S$ random vector, i.e. if the vector $(Re(X),Im(X))$ is jointly $S\alpha S$. 

To give a rigorous definition of harmonizable $S\alpha S$ processes, the notion of complex random measures needs to be introduced. 
Let $(E,\mathcal{E})$ be a measurable space, and let $(S^1,\mathcal{B}(S^1))$ be the measurable space on the unit circle $S^1$ equipped with the Borel $\sigma$-algebra $\mathcal{B}(S^1)$. 
Let $k$ be a measure on the product space $(E\times S^1,\mathcal{E}\times\mathcal{B}(S^1))$, and let
$$\mathcal{E}_0=\left\{A\in\mathcal{E}:k(A\times S^1)<\infty\right\}.$$
A \emph{complex-valued $S\alpha S$ random measure} on $(E,\mathcal{E})$ is an independently scattered, $\sigma$-additive, complex-valued set function 
$$M:\mathcal{E}_0\rightarrow L^0_c(\Omega)$$ 
such that the real and imaginary part of $M(A)$, i.e. the vector $\left(Re(M(A)),Im(M(A))\right)$, is jointly $S\alpha S$ with spectral measure $k(A\times\cdot)$ for every $A\in\mathcal{E}_0$ \cite[Definition 6.1.2]{gennady}. 
We refer to $k$ as the \emph{circular control measure} of $M$, and denote by $m(A)=k(A\times S^1)$ the \emph{control measure} of $M$. 
Furthermore, $M$ is isotropic if and only if its circular control measure is of the form $$k=m\cdot\gamma,$$ where $\gamma$ is the uniform probability measure on $S^1$ \cite[Example 6.1.6]{gennady}.

Define the space of $\alpha$-integrable functions on $E$ with respect to the measure $m$ by $$L^{\alpha}(E,m)=\left\{u:E\rightarrow\mathbb{C}: \int_E\vert u(x)\vert^\alpha m(dx)<\infty\right\}.$$ 
A stochastic integral with respect to a complex $S\alpha S$ random measures $M$ is defined by
\begin{align*}
	I(u)=\int_Eu(x)M(dx),
\end{align*}
for all $u\in L^\alpha(E,m)$. 
This stochastic integration is well-defined on the space $L^\alpha(E,m)$ \cite[Chapter 6.2]{gennady}. 
In fact, for simple functions $u(x)=\sum_{j=1}^n c_j\mathbbm{1}_{A_j}(x)$, where $c_j\in\mathbb{C}$ and $A_j\in\mathcal{E}_0$, it is easily seen that the integral $I(u)$ is well-defined. 
Moreover, for any function $u\in L^\alpha(E,m)$ one can find a sequence of simple functions $\{u_n\}_{n=1}^\infty$ which converges to the function $u$ almost everywhere on $E$ with $\vert u_n(x)\vert\leq v(x)$ for some function $v\in L^\alpha(E,m)$ for all $n\in\mathbb{N}$ and $x\in E$. 
The sequence $\{I(u_n)\}_{n=1}^\infty$ converges in probability, and $I(u)$ is then defined as its limit. 

Setting $(E,\mathcal{E})=(\mathbb{R},\mathcal{B})$ and $u(t,x)=e^{itx}$, the definition of harmonizable $S\alpha S$ processes is as follows.
\begin{definition}
	The stochastic process $X=\left\{X(t):t\in\mathbb{R}\right\}$ defined by 
	\begin{align*}
		X(t)=Re\left(\int_\mathbb{R}e^{itx}M(dx)\right),
	\end{align*}
	where $M$ is a complex $S\alpha S$ random measure on $(\mathbb{R},\mathcal{B})$ with finite circular control measure $k$ (equivalently, with finite control measure $m$), is called a \emph{real harmonizable $S\alpha S$ process}. 
\end{definition}
A real harmonizable $S\alpha S$ process is stationary if and only if $M$ is isotropic, i.e. its spectral measure is of the form $k=m\cdot\gamma$ \cite[Theorem 6.5.1]{gennady}.
In this case $X$ is called a \emph{stationary real harmonizable $S\alpha S$ process}. 
Furthermore, by \cite[Proposition 6.6.3]{gennady} the finite-dimensional characteristic function of a SRH $S\alpha S$ process $X$ is given by 
\begin{align}
	\label{charfn}
	\mathbb{E}\left[\exp\left\{i\sum_{i=1}^ns_iX(t_i)\right\}\right]=\exp\left\{-\lambda_\alpha\int_\mathbb{R}\left\vert\sum_{j,k=1}^ns_js_k\cos\left(\left(t_k-t_j\right)x\right)\right\vert^{\alpha/2}m(dx)\right\}
\end{align}
with constant
$
\lambda_\alpha=\frac{1}{2\pi}\int_0^{2\pi}\vert\cos\left(x\right)\vert^\alpha dx
$
$s_1,\dots,s_n\in\mathbb{R}$ and $t_1,\dots,t_n\in\mathbb{R}$ for all $n\in\mathbb{N}$. Clearly, the process $X$ is uniquely characterized by its control measure $m$ since all its finite-dimensional distributions are determined by $m$.

We assume that the control measure $m$ is absolutely continuous with respect to the Lebesgue measure on $\mathbb{R}$ with symmetric density function $f$, which we refer to as the \emph{spectral density} of the process $X$. The goal is to estimate $f$ from one single sample of observations $\left(X(t_1),\dots,X(t_n)\right)$.

\subsection{Ergodicity and series representation of stationary real harmonizable \texorpdfstring{$S\alpha S$}{SaS} processes}
\label{sectionergodic}

In statistical physics the study of ergodic properties of random processes is motivated by the fundamental question whether long-term empirical observations of a random process evolving in time, e.g. the motion of gas molecules, suffice to estimate the mean value of the observable on the state space of the process.  
In other words, are time-averages of observables equal to their so-called phase or ensemble averages?
First groundbreaking results, i.e. the pointwise or strong ergodic theorem and mean ergodic theorem were proven by Birkhoff in 1931 and von Neumann in 1932, respectively. 
These results also sparked the general study of ergodic theory in mathematics, in particular the field of dynamical systems.

A brief introduction to ergodic theory is given in the following.
Details can be found in \cite[Chapter 10]{kallenberg}.
Let $(\Omega,\mathcal{F},\mathbb{P})$ be a probability space and $X_0:\Omega\rightarrow\mathbb{R}$ be a random variable. 
Denote by $\phi=(\phi_t)_{t\in\mathbb{R}}$ a family of transformations on $\mathbb{R}$ satisfying the semi-group property $\phi_s\phi_t=\phi_{s+t}$ for all $s,t\in\mathbb{R}$.
We say that the transform $\phi_t$ is \emph{measure-preserving} for all $t\in\mathbb{R}$ w.r.t. the measure $P_{X_0}(B)=\mathbb{P}(X_0\in B)$, $B\in\mathcal{B}(\mathbb{R})$, if 
\begin{align*}
	P_{X_0}(B)=\mathbb{P}({X_0}\in B)=\mathbb{P}({X_0}\in \phi_t^{-1}(B))=P_{X_0}(\phi_t^{-1}(B))
\end{align*}
for all $t\in\mathbb{R}$ and $B\in\mathcal{B}(\mathbb{R})$. 
Note that from the above it is easy to follow that for all $t\in\mathbb{R}$ the family of transformations $(\phi_t)_{t\in\mathbb{R}}$ and the random variable ${X_0}$ generate a continuous-time stationary process with $X(t):=\phi_tX_0\stackrel{d}{=}X_0=X(0)$. 
Ex adverso, for any stationary process $\{X(t):t\in\mathbb{R}\}$ there exist such a family $\phi=(\phi_t)_{t\in\mathbb{R}}$ and a random variable ${X_0}$ which generate the process.

Any set $I\in\mathcal{B}(\mathbb{R})$ is called \emph{invariant} with respect to $\phi$ if $\phi_t^{-1}(I)=I$ for all $t\in\mathbb{R}$. 
The family of all $\phi$-invariant sets $\mathcal{I}=\left\{I\in\mathcal{B}(\mathbb{R}):\phi_t^{-1}(I)=I ~\forall t\in\mathbb{R}\right\}$ is a $\sigma$-algebra called the $\sigma$-algebra of invariant sets.
Furthermore, denote by $\mathcal{I}_{X_0}=X_0^{-1}(\mathcal{I})=\{X_0^{-1}(I):I\in\mathcal{I}\}$  the $\sigma$-algebra of preimages of $\phi$-invariant sets generated by the random variable $X_0$.

One of the main results in Ergodic theory is Birkhoff's ergodic theorem. We will state the continuous-time version of the theorem as it can be found in \cite[Corollary 10.9]{kallenberg}. 

\begin{thm}[Continuous-time ergodic theorem]
	\label{birkhoff}
	Let $\{X(t):t\in\mathbb{R}\}$ be a continuous-time stationary process generated by the random variable $X=X(0)$ and the family of measure-preserving transformations $(\phi_t)_{t\in\mathbb{R}}$ on $(\mathbb{R},\mathcal{B}(\mathbb{R}))$ with invariant $\sigma$-algebra $\mathcal{I}$. Then, for any measurable function $h\geq0$ on $\mathbb{R}$ it holds that 
	\begin{align*}
		T^{-1}\int_0^Th(\phi_\tau X_0)d\tau\longrightarrow \mathbb{E}\left[h(X_0)\vert \mathcal{I}_{X_0}\right]\quad a.s.
	\end{align*}
	as $T\rightarrow\infty$.
\end{thm}

Note that the above result can be extended to integrable function $h$ on $\mathbb{R}$ by linearity of the integration and expectation as well as the representation $h=h_+-h_-$, where $h_+$ and $h_-$ are the positive and negative parts of $h$, respectively. 

The stochastic process $\{X(t):t\in\mathbb{R}\}$ is \emph{ergodic} if and only if $\mathbb{P}(X_0\in I)=0$ or $1$ for any $I\in\mathcal{I}$, i.e. if the $\sigma$-algebra $\mathcal{I}_{X_0}$ is $\mathbb{P}$-trivial.
In this case, the above limit becomes deterministic, i.e. the conditional expectation reduces to $\mathbb{E}\left[h(X_0)\vert \mathcal{I}_{X_0}\right]=\mathbb{E}\left[h(X_0)\right]$.

\begin{prop}
	Harmonizable $S\alpha S$ process with $0<\alpha<2$ are non-ergodic. 
\end{prop}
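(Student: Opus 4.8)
The plan is to confront the conditionally Gaussian structure of a stationary (real) harmonizable $S\alpha S$ process with the ergodic theorem. As announced in the introduction and developed in Section~\ref{nonerglimit}, such a process admits a LePage-type series representation
\[
  X(t)\ \stackrel{d}{=}\ C_\alpha\sum_{j\ge 1}\Gamma_j^{-1/\alpha}\,\xi_j\,\sqrt2\,\cos\!\big(V_jt+U_j\big),\qquad t\in\mathbb R,
\]
where $\{\Gamma_j\}$ are the arrival times of a unit-rate Poisson process on $(0,\infty)$, the $V_j$ are i.i.d.\ with law $m$, the $U_j$ are i.i.d.\ uniform on $[0,2\pi)$, the $\xi_j$ are i.i.d.\ standard Gaussian, and all these families are independent. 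Put $\mathcal G:=\sigma\big(\{\Gamma_j\}_j,\{V_j\}_j,\{U_j\}_j\big)$ and $\Sigma^2:=2C_\alpha^2\sum_{j\ge1}\Gamma_j^{-2/\alpha}$. Conditionally on $\mathcal G$ the coefficients $C_\alpha\sqrt2\,\Gamma_j^{-1/\alpha}\cos(V_jt+U_j)$ are deterministic, so $X$ is then a centred Gaussian process with $\mathrm{Var}\big(X(\tau)\mid\mathcal G\big)=2C_\alpha^2\sum_j\Gamma_j^{-2/\alpha}\cos^2(V_j\tau+U_j)\le\Sigma^2$ for every $\tau$; and $\Sigma^2<\infty$ almost surely \emph{precisely because} $\alpha<2$ (for $\alpha=2$ this series diverges a.s.).

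Now assume, for contradiction, that $X$ is ergodic, and fix $s\ne 0$. Applying the continuous-time version of Theorem~\ref{birkhoff} to the bounded observable $h(x)=\cos(sx)$, the time average $L_s:=\lim_{T\to\infty}\tfrac1T\int_0^T\cos\big(sX(\tau)\big)\,d\tau$ exists almost surely, and since ergodicity makes the invariant $\sigma$-algebra trivial it equals $\mathbb E[\cos(sX(0))]$, which by the characteristic function \eqref{charfn} with $n=1$, $t_1=0$ and $m(\mathbb R)=1$ is the constant $e^{-\lambda_\alpha|s|^\alpha}$. On the other hand, conditioning on $\mathcal G$, using conditional dominated convergence ($|\cos|\le1$) together with Fubini, and then the conditional Gaussianity of $X(\tau)$, we obtain
\begin{align*}
  \mathbb E\big[L_s\mid\mathcal G\big]=\lim_{T\to\infty}\frac1T\int_0^T\mathbb E\big[\cos(sX(\tau))\mid\mathcal G\big]\,d\tau=\lim_{T\to\infty}\frac1T\int_0^T e^{-s^2\,\mathrm{Var}(X(\tau)\mid\mathcal G)/2}\,d\tau\ \ge\ e^{-s^2\Sigma^2/2}.
\end{align*}
Combining the two, $e^{-\lambda_\alpha|s|^\alpha}\ge e^{-s^2\Sigma^2/2}$ almost surely, i.e.\ $\lambda_\alpha|s|^{\alpha-2}\le\tfrac12\Sigma^2$ for every $s\ne0$. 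Letting $s\to0$ and using $\alpha<2$, the left-hand side tends to $+\infty$ while $\Sigma^2<\infty$ a.s. — a contradiction. Hence $X$ is non-ergodic.

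The one substantial ingredient is the conditionally Gaussian LePage representation supplied in Section~\ref{nonerglimit} (see also \cite{cambanis1}); everything else is soft. The point demanding most care in a full write-up is the measure-theoretic bookkeeping around $\mathcal G$: justifying the continuous-time ergodic theorem for $X$, exchanging $\mathbb E[\,\cdot\mid\mathcal G]$ with the time average, and realising the LePage series as an almost surely equal version of $X$ so that $\mathrm{Var}(X(\tau)\mid\mathcal G)$ is meaningful. A self-contained alternative, using only \eqref{charfn}, is also available: by the mean ergodic theorem, ergodicity would force $\tfrac1T\int_0^T\mathbb E\big[e^{is(X(0)-X(\tau))}\big]\,d\tau\to\big|\mathbb E[e^{isX(0)}]\big|^2=e^{-2\lambda_\alpha|s|^\alpha}$, whereas \eqref{charfn} gives $\mathbb E[e^{is(X(0)-X(\tau))}]=\exp\{-\lambda_\alpha(2s^2)^{\alpha/2}\int_{\mathbb R}(1-\cos\tau x)^{\alpha/2}f(x)\,dx\}$, and a Riemann--Lebesgue-type argument shows the integral converges as $\tau\to\infty$ to $\tfrac1{2\pi}\int_0^{2\pi}(1-\cos u)^{\alpha/2}\,du$; the resulting Cesàro limit differs from $e^{-2\lambda_\alpha|s|^\alpha}$ for every $\alpha\in(0,2)$, the obstruction reducing to a gamma-function inequality that becomes an equality only at $\alpha=2$. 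Either way the mechanism is identical: the $|s|^\alpha$-behaviour of the stable characteristic function near the origin is incompatible with the $s^2$-behaviour that (conditional) Gaussianity imposes, and $\alpha<2$ is exactly what separates the two.
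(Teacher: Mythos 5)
Your argument is essentially correct, but it is worth noting that the paper does not prove this proposition at all: it simply cites Cambanis, Hardin Jr.\ and Weron \cite[Theorem 4]{cambanis1}, so any complete argument here is ``a different route from the paper.'' Your main proof is a clean and self-contained one: conditional Gaussianity forces $\mathbb{E}[\cos(sX(\tau))\mid\mathcal G]\ge e^{-s^2\Sigma^2/2}$ with $\Sigma^2<\infty$ a.s., while ergodicity would force the time average to be the constant $e^{-\lambda_\alpha|s|^\alpha}$, and the $|s|^\alpha$ versus $s^2$ behaviour near $s=0$ is incompatible for $\alpha<2$; all the exchanges of limits you invoke are justified by boundedness of the integrand. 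Two remarks. First, the series you write down, with amplitude $\sqrt2\,\xi_j$ for a real standard Gaussian $\xi_j$, is \emph{not} the representation of Proposition~\ref{series}, whose $k$-th summand has Rayleigh amplitude $\Gamma_k^{-1/\alpha}\sqrt{(G_k^{(1)})^2+(G_k^{(2)})^2}$; your form is a legitimate generalized LePage series (any isotropic complex $W_j$ with finite $\alpha$-moment works, after adjusting the normalizing constant), but this would need its own justification, whereas the argument runs verbatim with the paper's own representation — conditionally on $\{\Gamma_k\},\{Z_k\}$ the variance is exactly $(C_\alpha b_\alpha^{-1}m(\mathbb{R}))^{2/\alpha}\sum_k\Gamma_k^{-2/\alpha}$, constant in $t$, which even simplifies your bound to an equality. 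Second, your alternative sketch via the Ces\`aro limit of $\mathbb{E}[e^{is(X(0)-X(\tau))}]$ is in substance the Cambanis--Hardin--Weron proof that the paper cites (the ergodicity criterion for stationary processes in terms of characteristic functions, plus the gamma-function identity that holds only at $\alpha=2$); it buys independence from the LePage machinery but requires verifying the equidistribution limit of $\int(1-\cos\tau x)^{\alpha/2}f(x)\,dx$ and the strict inequality $\tfrac1\pi\int_0^\pi\sin^\alpha v\,dv\ne 2^{1-\alpha}$ for $\alpha\in(0,2)$, neither of which you carry out in detail. Either route is sound; the first is arguably the more instructive complement to the paper since it reuses Proposition~\ref{series} directly.
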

This result was proven by Cambanis, Hardin Jr., Weron \cite[Theorem 4]{cambanis1}. 
The authors also show that $S\alpha S$ moving averages are ergodic and Sub-Gaussian processes are non-ergodic, see \cite[Chapter 3.6, 3.7]{gennady} for the respective definitions of these processes. 

\subsection{Independent path realizations, ensemble averages and \texorpdfstring{$\alpha$}{alpha}-sine transform}
\label{multpaths}

In the case that samples of independent path realizations of a SRH $S\alpha S$ process $X$ are available, consistent estimation of a symmetric spectral density $f$ of $X$ can be performed without any problems caused by the non-ergodicity of the process.  
Consider the codifference function, which is the stable law analog of the covariance function when second moments do not exist. 
It is defined by $\tau(t)=2\Vert X(0)\Vert_\alpha^\alpha-\Vert X(t)-X(0)\Vert_\alpha^\alpha$, where $\Vert X(0)\Vert_\alpha$ and $\Vert X(t)-X(0)\Vert_\alpha$ are the scale parameters of $X(0)$ and $X(t)-X(0)$, respectively, see \cite[Chapter 2.10]{gennady}.
Using the finite-dimensional characteristic function in Equation (\ref{charfn}) these scale parameters can be computed explicitly which yields 
\begin{align}
	\label{codiff}
	\tau(t)=2\sigma^\alpha - 2^\alpha\lambda_\alpha\int_{\mathbb R}\left\vert\sin\left(\frac{tx}{2}\right)\right\vert^\alpha m(dx),
\end{align}
see \cite[Chapter, 6.7]{gennady} and \cite[Section 5.2]{viet}.
Following Equation (\ref{codiff}) and assuming the control measure $m$ to have a symmetric density function $f$ we define the $\alpha$-sine transform of $f$ as 
\begin{align}
	\label{inteq}
	T_\alpha f(t):=\int_0^\infty\left\vert\sin\left(tx\right)\right\vert^\alpha f(x)dx=\frac{2\sigma^\alpha-\tau(2t)}{2^{\alpha+1}\lambda_\alpha}=\frac{\Vert X(2t)-X(0)\Vert_\alpha^\alpha}{2^{\alpha+1}\lambda_\alpha}.
\end{align}
Consistent estimation of the right hand side of (\ref{inteq}) (based on independent realizations of $X$) and the inversion of the above integral transform, which was studied in \cite{viet}, yield an estimate of the spectral density function $f$. 

In more detail, let $X^{(1)},\dots,X^{(L)}$ denote a sample of independent realizations of the path of $X$ finitely observed at points $0=t_0<t_1<\dots<t_n<\infty$, i.e. $X^{(l)}=(X^{(l)}(t_0),\dots,X^{(l)}(t_n))$ for $l=1,\dots,L$.
Then, for any fixed time instant $t_i$ the sample $(X^{(1)}(t_i),\dots,X^{(L)}(t_i))$ consists of i.i.d. $S\alpha S(\sigma)$ random variables. 
In particular, the vector of differences $(X^{(1)}(t_i)-X^{(1)}(t_0),\dots,X^{(L)}(t_i)-X^{(L)}(t_0))$ consists of i.i.d. random samples of $X(t_i)-X(t_0)\sim S\alpha S(\sigma(t_i))$ with scale parameter $\sigma(t_i)=\Vert X(t_i)-X(t_0)\Vert_\alpha$ which depends on the lag $t_i-t_0$.

There are a handful of consistent parameter estimation techniques available in literature, e.g. McCulloch's quantile based method \cite{mcculloch} or the regression-type estimators by Koutrouvelis \cite{koutrouvelis}. 
These methods can be used to estimate the index of stability $\alpha$ and the scale parameters $\sigma(t_i)$. 
Let $(\hat\alpha, \hat\sigma(t_i))$ be a consistent estimator of $(\alpha,\sigma(t_i))$. 
Then, 
\begin{align}
	\label{Tf_est}
	\widehat T_\alpha f(t_i/2) = \frac{\hat\sigma(t_i)}{2^{\hat\alpha+1}\lambda_{\hat\alpha}}
\end{align}
is consistent for $T_\alpha f(t_i/2)$ for all $i=1,\dots,n$. 
Using the results of \cite{viet} we can estimate the Fourier transform of the spectral density $f$ at equidistant points, which then allows us to reconstruct $f$ itself using interpolation methods from sampling theory and Fourier inversion.
An illustration of the described estimation method for independent paths is given in 
Section \ref{indeppaths}, Figure \ref{fig:alphasine}. 
We will not go into further details at this point as our main interest lies within single path statistics. 

\section{Non-ergodic limit of sample functions}
\label{nonerglimit}

Trying to estimate the spectral density $f$ from a single path of a SRH $S\alpha S$ process, determining the non-ergodic almost sure limit of empirical functions is essential. 
We make use of the following LePage type series representation of $X$ which stems from the series representation of complex $S\alpha S$ random measures \cite[Section 6.4]{gennady}.  
As a consequence, SRH $S\alpha S$ processes are in fact conditionally Gaussian which allows us to use their underlying Gaussian structure for further analysis.
\begin{prop}[LePage Series representation]
	\label{series}
	Let $X=\{X(t):t\in\mathbb{R}\}$ be a SRH $S\alpha S$ process with finite control measure $m$. Then, X is conditionally stationary centered Gaussian with 
	\begin{align*}
		X(t)\stackrel{d}{=}\left(C_\alpha b_\alpha^{-1}m(\mathbb{R})\right)^{1/\alpha}G(t),\quad t\in\mathbb{R},
	\end{align*}
	where
	\begin{align}
		\label{harmlepageseries}
		G(t) = \sum_{k=1}^\infty \Gamma_k^{-1/\alpha}\left(G_k^{(1)}\cos(tZ_k)+G_k^{(2)}\sin(tZ_k)\right),\quad t\in\mathbb{R},
	\end{align}
	and the constants $b_\alpha$ and $C_\alpha$ are given by 
	\begin{align*}
		b_\alpha&=2^{\alpha/2}\Gamma\left(1+\alpha/2\right),\\
		C_\alpha&=\left(\int_0^\infty x^{-\alpha}\sin(x)dx\right)^{-1}=
		\begin{cases}
			\frac{1-\alpha}{\Gamma(2-\alpha)\cos(\pi\alpha/2)},&\alpha\neq 1,\\
			2/\pi,&\alpha=1.
		\end{cases}
	\end{align*}
	Furthermore, the sequence  $\left\{\Gamma_k\right\}_{k=1}^\infty$ denotes the arrival times of a unit rate Poisson process, $\{G_k^{(i)}\}_{k=1}^\infty$, $i=1,2$, are sequences of \ i.i.d. standard normally distributed random variables, and $\left\{Z_k\right\}_{k=1}^\infty$ is a sequence of \ i.i.d. random variables with law $m(\cdot)/m(\mathbb{R})$.
\end{prop}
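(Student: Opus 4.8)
The plan is to deduce \eqref{harmlepageseries} from the series representation of the complex isotropic $S\alpha S$ random measure $M$ (\cite[Section 6.4]{gennady}). Applied with the integrand $u(x)=e^{itx}$ — which is legitimate jointly in $t$, since the series representation is really a statement about the random measure $M$ itself rather than about a single stochastic integral — it gives
\begin{align*}
\int_\mathbb{R}e^{itx}\,M(dx)\ \stackrel{d}{=}\ \kappa\sum_{k\ge1}\Gamma_k^{-1/\alpha}\,\xi_k\,e^{itZ_k},\qquad t\in\mathbb{R},
\end{align*}
where $\{\Gamma_k\}$ and $\{Z_k\}$ are as in the statement, the $\xi_k$ are i.i.d.\ circularly symmetric complex random variables independent of everything else (isotropy of $M$ forces circular symmetry), and $\kappa$ is a constant depending only on $\alpha$, $m(\mathbb{R})$ and the chosen normalisation of the $\xi_k$. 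One is free to take $\xi_k=G_k^{(1)}+iG_k^{(2)}$ with $G_k^{(1)},G_k^{(2)}$ i.i.d.\ standard normal; this choice is the source of the conditional Gaussianity. Taking real parts and using $\mathrm{Re}\big((G_k^{(1)}+iG_k^{(2)})e^{itZ_k}\big)=G_k^{(1)}\cos(tZ_k)-G_k^{(2)}\sin(tZ_k)$ together with $\{-G_k^{(2)}\}\stackrel{d}{=}\{G_k^{(2)}\}$ then yields $X(t)\stackrel{d}{=}\kappa\,G(t)$ with $G$ exactly as stated; it remains to identify $\kappa=\big(C_\alpha b_\alpha^{-1}m(\mathbb{R})\big)^{1/\alpha}$ and to record the conditional Gaussian structure.

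For the constant, the cleanest route is to verify that the right-hand side of \eqref{harmlepageseries} reproduces the finite-dimensional characteristic function \eqref{charfn}. Conditioning on $\mathcal{G}=\sigma(\{\Gamma_k\},\{Z_k\})$, the vector $(G(t_1),\dots,G(t_n))$ is centered Gaussian with conditional covariance $\sum_{l\ge1}\Gamma_l^{-2/\alpha}\cos((t_k-t_j)Z_l)$, so that
\begin{align*}
\mathbb{E}\Big[\exp\Big\{i\sum_js_jX(t_j)\Big\}\ \Big|\ \mathcal{G}\Big]=\exp\Big\{-\tfrac{\kappa^2}{2}\sum_{l\ge1}\Gamma_l^{-2/\alpha}\,Q(Z_l)\Big\},\qquad Q(z):=\Big|\sum_js_je^{it_jz}\Big|^2\ge0.
\end{align*}
Since $\{(\Gamma_l,Z_l)\}$ is a Poisson process on $(0,\infty)\times\mathbb{R}$ with intensity $dt\otimes m(dz)/m(\mathbb{R})$, its Laplace functional gives, for nonnegative bounded $h$, the identity $\mathbb{E}\big[\exp\{-\theta\sum_l\Gamma_l^{-2/\alpha}h(Z_l)\}\big]=\exp\{-\theta^{\alpha/2}\Gamma(1-\alpha/2)\int h^{\alpha/2}\,dm/m(\mathbb{R})\}$ (well defined because $\alpha/2\in(0,1)$). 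Taking $\mathbb{E}[\cdot]$ above with $\theta=\kappa^2/2$ and $h=Q$, and using $Q(z)^{\alpha/2}=\big|\sum_{j,k}s_js_k\cos((t_k-t_j)z)\big|^{\alpha/2}$ together with $\kappa^2=\big(C_\alpha b_\alpha^{-1}m(\mathbb{R})\big)^{2/\alpha}$, the comparison with \eqref{charfn} collapses to the scalar identity $2^{-\alpha/2}C_\alpha b_\alpha^{-1}\Gamma(1-\alpha/2)=\lambda_\alpha$, which in turn follows from $b_\alpha=2^{\alpha/2}\Gamma(1+\alpha/2)$, the Legendre duplication and Euler reflection formulas, and the Beta-integral evaluation $\lambda_\alpha=\Gamma\big((1+\alpha)/2\big)\big/\big(\sqrt{\pi}\,\Gamma(1+\alpha/2)\big)$.

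That the right-hand side is genuinely a conditionally stationary centered Gaussian process is then immediate. Almost surely $\sum_k\Gamma_k^{-2/\alpha}<\infty$, since $\Gamma_k/k\to1$ and $2/\alpha>1$; hence, conditionally on $\{\Gamma_k\}$, the series defining $G(t)$ is a sum of independent centered Gaussians with summable variances and converges a.s.\ and in $L^2$ for every $t$. Every finite-dimensional conditional law is therefore centered Gaussian, its conditional covariance $\sum_l\Gamma_l^{-2/\alpha}\cos((t-s)Z_l)$ depends on $s,t$ only through $t-s$, and $\mathbb{E}[G(t)]=0$ because $\mathbb{E}[G_k^{(i)}]=0$.

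The main obstacle is the identification of $\kappa$, and specifically the factor $b_\alpha^{-1/\alpha}$: this is precisely what it costs to write the series with Gaussian multipliers rather than with circularly symmetric unit multipliers. Carried through directly, it rests on the point-process fact that re-weighting the Poisson arrival times $\Gamma_k$ by independent i.i.d.\ factors $W_k$ with $\mathbb{E}[W_k^{-1}]<\infty$ produces again a Poisson process, now of rate $\mathbb{E}[W_k^{-1}]$; here $W_k=|G_k^{(1)}+iG_k^{(2)}|^{-\alpha}$, and $\mathbb{E}[W_k^{-1}]=\mathbb{E}\,|G_1^{(1)}+iG_1^{(2)}|^{\alpha}=2^{\alpha/2}\Gamma(1+\alpha/2)=b_\alpha$ because $|G_1^{(1)}+iG_1^{(2)}|^2\sim2\,\mathrm{Exp}(1)$. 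The other point that needs care — that \eqref{harmlepageseries} must hold as an identity of \emph{processes}, not merely of one-dimensional marginals — is the reason one argues from the series representation of the random measure $M$ itself.
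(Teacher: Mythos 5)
Your proposal is correct. Note, however, that the paper does not actually prove this proposition: it is taken as a known result, imported from the series representation of complex isotropic $S\alpha S$ random measures in Samorodnitsky--Taqqu (Section 6.4, with the conditional Gaussian structure from their Proposition 6.6.4). What you have written is therefore a self-contained reconstruction rather than a parallel of an argument in the text. Your route is sound: starting from the LePage series of the random measure $M$ guarantees the identity holds at the level of finite-dimensional distributions and not just one-dimensional marginals; the choice $\xi_k=G_k^{(1)}+iG_k^{(2)}$ is exactly what makes the process conditionally Gaussian; and your identification of the constant by matching the conditional Gaussian characteristic function against (\ref{charfn}) via the Laplace functional of the marked Poisson process $\{(\Gamma_l,Z_l)\}$ is a clean way to pin down the normalisation without invoking the general LePage theorem's scale constant. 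I checked the key scalar identity $2^{-\alpha/2}C_\alpha b_\alpha^{-1}\Gamma(1-\alpha/2)=\lambda_\alpha$: using $C_\alpha^{-1}=\Gamma(1-\alpha)\cos(\pi\alpha/2)$, the reflection and duplication formulas, and $\lambda_\alpha=\Gamma\left((1+\alpha)/2\right)/\left(\sqrt{\pi}\,\Gamma(1+\alpha/2)\right)$, it holds for $0<\alpha<2$. The computation $\mathbb{E}\left[\left|G_1^{(1)}+iG_1^{(2)}\right|^\alpha\right]=2^{\alpha/2}\Gamma(1+\alpha/2)=b_\alpha$ is also correct, as is the convergence argument via $\sum_k\Gamma_k^{-2/\alpha}<\infty$ a.s. What your approach buys over the paper's bare citation is a verifiable derivation of the constants $C_\alpha$ and $b_\alpha$; what it costs is that the convergence of the series to the stochastic integral (rather than to \emph{some} $S\alpha S$ process with the right law) still ultimately leans on the cited representation theorem for $M$, which you acknowledge.
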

\begin{ag2*}
	Additionally to the assumption that the control measure $m$ is absolutely continuous with respect to the Lebesgue measure on $\mathbb{R}$, let its density $f$ be symmetric with $m(\mathbb{R})=1$, i.e. the spectral density $f$ is in fact a probability density function.
\end{ag2*}

The process $G$ in Proposition \ref{series}, conditionally on the sequences $\left\{\Gamma_k\right\}_{k=1}^\infty$ and $\left\{Z_k\right\}_{k=1}^\infty$, is a stationary centered Gaussian process with autocovariance function
\begin{align*}
	r(t)=\mathbb{E}\left[G(0)G(t)\right]=\sum_{k=1}^\infty\Gamma_k^{-2/\alpha}\cos\left(t Z_k\right),\quad t\in\mathbb{R}
\end{align*}
\cite[Proposition 6.6.4]{gennady}. 
Note that by the Wiener-Khinchin theorem the autocovariance function $r$ and the spectral measure $\zeta$ of $G$ are directly related by the Fourier transform, i.e. $r(t)=\int_{\mathbb{R}}e^{it\omega}\zeta(d\omega)$, and one can compute
\begin{align*}
	\zeta(d\omega)=
	\sum_{k=1}^\infty\frac{\Gamma_j^{-2/\alpha}}{2}\left(\delta\left(d\omega - Z_j\right)+\delta\left(d\omega + Z_j\right)\right) 
\end{align*}
via Fourier inversion.
The spectral measure $\zeta$ is purely discrete, hence $G$ is non-ergodic, as Gaussian processes are ergodic if and only if their corresponding spectral measure is absolutely continuous \cite[Chapter 6.5]{lindgren}. 

The asymptotic behavior of time averages for non-ergodic Gaussian processes is studied in \cite{slezak}. 
There, the so-called Harmonic Gaussian processes of the form 
\begin{align}
	\label{harmonic}
	X(t)=\sum_kA_ke^{it\omega_k},
\end{align}
are considered, where $A_k$ are independent complex Gaussian random variables, and $\omega_k$ are deterministic frequencies of the process.

The autocovariance function $r$ and spectral measure $\sigma$ of the above harmonic Gaussian process are given by 
\begin{align*}
	r(t)=\sum_k\mathbb{E}\vert A_k\vert^2\cos(\omega_kt) \quad\text{and}\quad
	\sigma(d\omega)=\sum_{k}\frac{\mathbb{E}\vert A_k\vert^2}{2}\left(\delta\left(d\omega - \omega_k\right)+\delta\left(d\omega + \omega_k\right)\right)
\end{align*}
\cite[Equation (22), (23)]{slezak}. 
Since SRH $S\alpha S$ processes are conditionally Gaussian, we can embed them into the above non-ergodic harmonic Gaussian case and apply the results in \cite{slezak}.

\begin{thm}
	\label{thmamplphasefreq}
	Let $X$ be a SRH $S\alpha S$ process with finite control measure $m$.
	Then, the process $X$ admits the series representation 
	\begin{align}
		\label{amplphasefreq}
		X(t)\stackrel{d}{=}\sum_{k=1}^\infty R_k\cos\left(\Theta_k+tZ_k\right),
	\end{align}
	where $\left\{\Theta_k\right\}$ are i.i.d. uniformly distributed on $(0,2\pi)$,
	\begin{align*}
		R_k=\left(C_\alpha b_\alpha^{-1}m(\mathbb{R})\right)^{1/\alpha}\Gamma_k^{-1/\alpha}\sqrt{\left(G_k^{(1)}\right)^2+\left(G_k^{(2)}\right)^2}
	\end{align*}
	and $\{\Gamma_k\}_{k=1}^\infty,\{G^{(1)}_k\}_{k=1}^\infty,\{G^{(2)}_k\}_{k=1}^\infty,\{Z_k\}_{k=1}^\infty$ are the sequences in the series representation of $X$ from Proposition \ref{series}.
\end{thm}
\begin{proof}
	The series representation of $X$ in Equation (\ref{harmlepageseries}) clearly shows a strong resemblance to a Fourier series. Similarly to the equivalence of the sine-cosine and exponential form of Fourier series, 
	the process X admits the form
	\begin{align}
		\label{exponentialformX}
		X(t)\stackrel{d}{=}\sum_{k=-\infty}^\infty A_ke^{itZ_k},
	\end{align}
	where $\{A_k\}_{k=1}^\infty$ conditionally on $\left\{\Gamma_k\right\}_{k=1}^\infty$ is a sequence of complex Gaussian random variables  with 
	\begin{align}
		\label{Akvariancemixture}
		A_k=\left(C_\alpha b_\alpha^{-1}m(\mathbb{R})\right)^{1/\alpha}\frac{\Gamma_k^{-1/\alpha}}{2}G_k^{(1)}-i\left(C_\alpha b_\alpha^{-1}m(\mathbb{R})\right)^{1/\alpha}\frac{\Gamma_k^{-1/\alpha}}{2}G_k^{(2)}
	\end{align} 
	for $k\geq 1$, and $A_0=0$. Furthermore, we set $A_{-k}=A_k^\ast$, the complex conjugate of $A_k$, and $Z_{-k}=-Z_k$. One can easily verify that the exponential form in Equation (\ref{exponentialformX}) is indeed equal in distribution to the series representation (\ref{harmlepageseries}) of $X$ from Proposition \ref{series}. To do so, denote $\tilde{C}_k=\left(C_\alpha b_\alpha^{-1}m(\mathbb{R})\right)^{1/\alpha}\frac{\Gamma_k^{-1/\alpha}}{2}$ for ease of notation, and compute
	\begin{align*}
		A_ke^{itZ_k}&=\tilde{C}_k\left(G_k^{(1)}+iG_k^{(2)}\right)\left(\cos(tZ_k)+i\sin(tZ_k)\right)\\
		&=\tilde{C}_k\left(G_k^{(1)}\cos(tZ_k)-G_k^{(2)}\sin(tZ_k)+iG_k^{(1)}\sin(tZ_k)+iG_k^{(2)}\cos(tZ_k)\right),\\
		A_{-k}e^{-Z_{-k}}&=\tilde{C}_k\left(G_k^{(1)}-iG_k^{(2)}\right)\left(\cos(tZ_k)-i\sin(tZ_k)\right)\\
		&=\tilde{C}_k\left(G_k^{(1)}\cos(tZ_k)-G_k^{(2)}\sin(tZ_k)-iG_k^{(1)}\sin(tZ_k)-iG_k^{(2)}\cos(tZ_k)\right).
	\end{align*}
	Consequently,
	\begin{align*}
		A_ke^{itZ_k}+A_{-k}e^{-Z_{-k}}&=2\tilde{C}_k\left(G_k^{(1)}\cos(tZ_k)-G_k^{(2)}\sin(tZ_k)\right)\\&=\left(C_\alpha b_\alpha^{-1}m(\mathbb{R})\right)^{1/\alpha}\Gamma_k^{-1/\alpha}\left(G_k^{(1)}\cos(tZ_k)-G_k^{(2)}\sin(tZ_k)\right)
	\end{align*}
	is equal in distribution to the $k$-th summand of the series representation in Proposition \ref{series}, taking into account that $-G_k^{(2)}\stackrel{d}{=}G_k^{(2)}$ by symmetry of the standard normal distribution.
	
	Conditionally on $\left\{\Gamma_k\right\}_{k=1}^\infty$ and $\left\{Z_k\right\}_{k=1}^\infty$, the SRH $S\alpha S$ process X is in fact a non-ergodic harmonic Gaussian process as in Equation (\ref{harmonic}).
	Analogously to \cite[Equation (27)]{slezak}, setting
	\begin{align*}
		R_k&=2\vert A_k\vert
		=\left(C_\alpha b_\alpha^{-1}m(\mathbb{R})\right)^{1/\alpha}\Gamma_k^{-1/\alpha}\sqrt{\left(G_k^{(1)}\right)^2+\left(G_k^{(2)}\right)^2}
	\end{align*}
	yields an alternative series representation for the process $X$ with
	\begin{align*}
		X(t)\stackrel{d}{=}\sum_{k=1}^\infty R_k\cos\left(\Theta_k+tZ_k\right),
	\end{align*}
	where $R_k$ are the amplitudes of the spectral points at the frequencies $Z_k$, and $\Theta_k$ are i.i.d. uniformly distributed phases of $A_k$ on $(0,2\pi)$. 
\end{proof}

It is now possible to state the non-ergodic almost sure limit of the empirical characteristic function of an SRH $S\alpha S$ process $X=\{X(t):t\in\mathbb{R}\}$.
\begin{thm}
	\label{nonerglim}
	
	Let $X$ be a SRH $S\alpha S$ process with finite control measure $m$. Then, for all $\lambda\in\mathbb{R}$
	\begin{align*}
		T^{-1}\int_0^Te^{i\lambda X(\tau)}d\tau\longrightarrow\mathbb{E}\left[e^{i\lambda X(t)}\vert\mathcal{A}\right]=\prod_{k=1}^\infty J_0(\lambda R_k)\quad a.s.
	\end{align*}
	as $T\rightarrow\infty$ , where $\mathcal{A}=\sigma(\{\Gamma_k^{-1/\alpha}G_k^{(1)},\Gamma_k^{-1/\alpha}G_k^{(2)}, Z_k\}_{k\in\mathbb{N}})$ and $J_0(s)=\frac{1}{2\pi}\int_{-\pi}^\pi e^{is\cos(u)}du$ is the Bessel function of the first kind of order $0$.
\end{thm}
\begin{proof}	
	Using the series representation in Equation (\ref{amplphasefreq}) from Theorem \ref{thmamplphasefreq}, we first compute the characteristic function of $X(t)$ conditional on $\mathcal{A}$. 
	Note that the amplitudes $\{R_k\}$ are measurable functions of $\Gamma_k^{-1/\alpha}G_k^{(1)},\Gamma_k^{-1/\alpha}G_k^{(2)}$, and are  therefore measurable with respect to $\mathcal{A}$.
	For any $\lambda\in\mathbb{R}$ it holds that 
	\begin{align*}
		\mathbb{E}\left[\exp\left(i\lambda X(t)\right)\vert \mathcal{A}\right]=&\mathbb{E}\left[\exp\left(i\lambda\sum_{k=1}^\infty R_k\cos\left(\Theta_k + t Z_k\right)\right)\vert \mathcal{A} \right]
		=\mathbb{E}\left[\prod_{k=1}^\infty\exp\left(i\lambda R_k\cos\left(\Theta_k + t Z_k\right)\right)\vert \mathcal{A} \right]\\
		=&\prod_{k=1}^\infty\underbrace{\mathbb{E}\left[\exp\left(i\lambda R_k\cos\left(\Theta_k + t Z_k\right)\right)\vert \mathcal{A} \right]}_{\frac{1}{2\pi}\int_{-\pi}^\pi\exp\left(i\lambda R_k\cos(u)\right)du=J_0\left(\lambda R_k\right)}=\prod_{k=1}^\infty J_0(\lambda R_k).
	\end{align*}
	
	Birkhoff's ergodic theorem, see Theorem \ref{birkhoff} in Section \ref{sectionergodic}, readily states that the empirical characteristic function $T^{-1}\int_0^Te^{i\lambda X(\tau)}d\tau$ converges almost surely to the conditional expectation $\mathbb{E}\left[e^{i\lambda X(t)}\vert \mathcal{I}_{X_0}\right]$. Note that the theorem is stated for all measurable functions $h\geq 0$ but can easily be extended to our case. Linearity of integration and expectation allows us to consider the real and imaginary part of the complex exponential separately. Each can then be expressed as the difference of positive and negative parts. 
	
	What remains to be shown is the equality $\mathcal{I}_{X_0}=\mathcal{A}$. 
	The inclusion $\mathcal{I}_{X_0}\subseteq\mathcal{A}$ follows easily from the LePage-type series representation in Proposition \ref{series}. It holds that $X(t)$ is a series of measurable functions of $\Gamma_k^{-1/\alpha}G_k^{(1)}$, $\Gamma_k^{-1/\alpha}G_k^{(2)}$ and $Z_k$, $k\in\mathbb{N}$, see Equation (\ref{harmlepageseries}). Hence, it is $\mathcal{A}$-measurable. Additionally, the inclusion $\mathcal{I}\subseteq\mathcal{B}(\mathbb{R})$ holds, which yields
	$
	\mathcal{I}_{X_0}=X^{-1}(\mathcal{I})\subseteq X^{-1}(\mathcal{B}(\mathbb{R}))\subseteq\mathcal{A}.
	$
	
	For the reverse inclusion $\mathcal{I}_{X_0}\supseteq\mathcal{A}$ note that $\mathcal{A}=\sigma(\mathcal{E})$, where $\mathcal{E}$ is a system of sets that generates $\mathcal{A}$. 
	A suitable choice of $\mathcal{E}$ is given by the collection of cylinder sets of the form
	\begin{align*}
		A=\left\{\omega\in\Omega:\Gamma_k^{-1/\alpha}(\omega)G_k^{(1)}(\omega)\leq a_k, \Gamma_k^{-1/\alpha}(\omega)G_k^{(2)}(\omega)\leq b_k, Z_k(\omega)\leq c_k, k=1,\dots,n\right\}
	\end{align*} 
	with $a_k,b_k,c_k\in\mathbb{R}$, $k=1,\dots,n$, and $n\in\mathbb{N}$. 
	
	Consider the set $B=\{X(s;\omega):s\in\mathbb{R},\omega\in A\}$. 
	For any $x\in B$ there exist $s\in\mathbb{R}$ and $\omega\in A$ such that $x=X(s;\omega)$. Then, $\phi_tx=X(s+t;\omega)\in B$. Conversely, for $x\in\phi_t^{-1}(B)$, it holds that $T_tx\in B$, i.e. there exist $s\in\mathbb{R}$ and $w\in A$ such that $T_tx=X(s;\omega)$. Clearly, $x=\phi_{-t}\phi_tx=X(s-t;\omega)\in B$. 
	Hence, the set $B$ is $\phi$-invariant as it holds that $\phi_t^{-1}(B)=B$ for all $t\in\mathbb{R}$, i.e. $B\in\mathcal{I}$. 
	
	By definition of $\mathcal{I}_{X_0}$ we have $X_0^{-1}(B)\in\mathcal{I}_{X_0}$, and as a consequence it holds that $\sigma(X_0^{-1}(B))\subseteq \mathcal{I}_{X_0}$. 
	Denote by $X_0(A)$ the set $\{X(0;\omega):\omega\in A\}$, which clearly is a subset of $B$. It follows that $A=X_0^{-1}(X_0(A))\subseteq X_0^{-1}(B)$. Since the $\sigma$-algebra $\mathcal{A}$ is generated by all sets $A\in\mathcal{E}$ we conclude that $\mathcal{A}\subseteq\sigma(X_0^{-1}(B))\subseteq\mathcal{I}_{X_0}$.
\end{proof}

\begin{rem}
	\label{lagprocess}
	For the lag process $\left\{X(t+h)-X(t):t\in\mathbb{R}\right\}$ with $h>0$ it holds that
	\begin{align*}
		\lim_{T\rightarrow\infty}\frac{1}{T}\int_0^Te^{i\lambda X(\tau+h)-X(\tau)}d\tau=\prod_{k=1}^\infty J_0\left(2\lambda R_k\sin\left(hZ_K/2\right)\right)\quad a.s.
	\end{align*}
	For the proof note that the computation of $\mathbb{E}\left[\exp\left(i\lambda(X(t+h)-X(t))\right)\vert\left\{R_k\right\},\left\{Z_k\right\}\right]$ is straightforward using $\cos(\Theta_k+(t+h)Z_k)-\cos(\Theta_k+t Z_k)=-2\sin\left(hZ_k/2\right)\sin\left(\Theta_k+t Z_k+hZ_k/2\right)$, where the latter term $-\sin\left(\Theta_k+t Z_k+hZ_k/2\right)$ is equal in distribution to $\cos\left(\Theta_k\right)$ with $\Theta_k\sim U(0,2\pi)$ i.i.d. 
\end{rem}

\begin{rem}
	Note that $\prod_{k=1}^\infty J_0(\lambda R_k)\not\equiv 0$ on $\mathbb{R}$. 
	Clearly, it holds that $\prod_{k=1}^\infty J_0(\lambda R_k)=1$ for $\lambda=0$ since $J_0(0)=1$. This makes sense since the empirical characteristic function $T^{-1}\int_0^Te^{i\lambda X(\tau)}d\tau=1$ for $\lambda=0$. 
	
	Let $\lambda>0$ and denote by $R_{[k]}$ be the $k$-th largest amplitude, i.e. $R_{[1]}\geq R_{[2]}\geq\dots$ and by $j_{0,1}\approx 2.4048$ the first positive root of the Bessel function $J_0$.
	Then, by monotonicity it holds that $0<J_0(\lambda R_{[k]})\leq J_0(\lambda R_{[k+1]})<1$ for all $\lambda\in(0,j_{0,1}/R_{[1]})$ and $k\in\mathbb{N}$.
	
	Consider $\lambda\in(0,2/R_{[1]})$. Applying the logarithm to the infinite product yields a series of logarithms which converges absolutely, as one can bound
	\begin{align*}
		\sum_{k=1}^\infty\left\vert\log\left(J_0\left(\lambda R_{k}\right)\right)\right\vert
		&\leq\frac{\lambda^2}{4-\lambda^2R_{[1]}}\sum_{k=1}^\infty R_{k}^2\\
		&=\frac{\lambda^2}{4-\lambda^2R_{[1]}}\left(C_\alpha b_\alpha^{-1}m(\mathbb{R})\right)^{2/\alpha}\sum_{k=1}^\infty\Gamma_k^{-2/\alpha}\underbrace{\left(\left(G_k^{(1)}\right)^2+\left(G_k^{(1)}\right)^2\right)}_{=W_k^2}.
	\end{align*}
	This upper bound holds true by the monotonicity of $J_0(\lambda R_{[k]})$ in $k\in\mathbbm{N}$ for $\lambda\in(0,j_{0,1}/R_{[1]})$ and the inequalities $\log(1/x)\leq 1/x-1$ for $x>0$ as well as $J_0(x)\geq 1-x^2/4$ for $x\in(0,2)$. The latter follows from the Taylor series expansion of $J_0$. 
	
	The series $\sum_{k=1}^\infty\Gamma_k^{-2/\alpha}W^2_k$ converges almost surely for all $\alpha\in(0,2)$ by \cite[Theorem 1.4.5]{gennady}, since $W_k^2$ are i.i.d. $\chi^2_2$-distributed with finite moments $\mathbb{E}\vert W^2_k\vert^{\alpha/2}=\mathbbm{E}\vert W_k\vert^\alpha<\infty$ for $\alpha\in(0,2)$. Ultimately, it follows that $\prod_{k=1}^\infty J_0(\lambda R_k)\in(0,1)$ almost surely for $\lambda\in(0,2/R_{[1]})$.
\end{rem}
\begin{cor}
	Let $h:\mathbb{R}\rightarrow\mathbb{R}$ be an integrable function with integrable Fourier transform $\mathcal{F}h$ such that $h(x)=\int_{\mathbb{R}}\mathcal{F}h(y)e^{ixy}dy$. Then 
	\begin{align*}
		\lim_{T\rightarrow\infty}\frac{1}{T}\int_0^Th(X(\tau))d\tau=\int_\mathbb{R}\mathcal{F}h(y)\left(\prod_{k=1}^\infty J_0(yR_k)\right)dy\quad a.s.
	\end{align*}
	If $h$ is $p$-periodic with absolutely convergent Fourier series $h(x)=\sum_{n\in\mathbb{Z}}c_ne^{i\frac{p}{2\pi}nx}$, then
	\begin{align*}
		\lim_{T\rightarrow\infty}\frac{1}{T}\int_0^Th(X(\tau))d\tau=\sum_{n\in\mathbb{Z}}c_n\prod_{k=1}^\infty J_0\left(n\frac{p}{2\pi}R_k\right)\quad a.s.
	\end{align*}
\end{cor}
\begin{proof}
	We prove the first part only since all arguments are applicable for the periodic $h$ as well. By Fubini's theorem and Lebesgue's dominated convergence theorem we can interchange the order of integration and limit, which yields
	\begin{align*}
		\lim_{T\rightarrow\infty}\frac{1}{T}\int_0^Th(X(\tau))d\tau&=\lim_{T\rightarrow\infty}\frac{1}{T}\int_0^T\int_{\mathbb{R}}\mathcal{F}h(y)\exp\left\{iy\sum_{k=1}^\infty R_k\cos\left(\Theta_k+\tau Z_k\right)\right\}dyd\tau\\
		&=\int_{\mathbb{R}}\mathcal{F}h(y)\left(\lim_{T\rightarrow\infty}\frac{1}{T}\int_0^T\exp\left\{iy\sum_{k=1}^\infty R_k\cos\left(\Theta_k+\tau Z_k\right)\right\}d\tau\right)dy\\
		&=\int_\mathbb{R}\mathcal{F}h(y)\left(\prod_{k=1}^\infty J_0(yR_k)\right)dy\quad a.s.
	\end{align*}
\end{proof}

Summarizing, it becomes clear that, conditionally on $\left\{R_k\right\}_{k=1}^\infty$ and $\left\{Z_k\right\}_{k=1}^\infty$, SRH $S\alpha S$ processes are non-ergodic harmonic Gaussian processes from Equation (\ref{harmonic}).
For SRH $S\alpha S$ processes additional randomness is introduced by the random frequencies $\{Z_k\}_{k=1}^\infty$ and the arrival times $\{\Gamma_k\}_{k=1}^\infty$, which are in a sense random variances to the Gaussian random variables $\{G_k^{(i)}\}_{k=1}^\infty$, $i=1,2$. 
Indeed, the sequence $\left\{A_k\right\}_{k=1}^\infty$ in Equation (\ref{Akvariancemixture}) consists of variance mixtures of Gaussian random variables.

\section{Spectral density estimation}

The underlying Gaussian structure of the SRH $S\alpha S$ process $X$ plays a central role in the estimation of the spectral density. 
The LePage series representation and Theorem \ref{thmamplphasefreq} demonstrate that the randomness of the paths of $X$ is generated by the Gaussian variance mixture amplitudes $\left\{R_k\right\}_{k=1}^\infty$ as well as the random frequencies $\left\{Z_k\right\}_{k=1}^\infty$. 
Although these quantities are inherently random and not directly observable, they are fixed for a given path. 
It is therefore possible to consider a path of the harmonizable process $X$ as a signal generated by frequencies $\left\{Z_k\right\}_{k=1}^\infty$ with amplitudes $\left\{R_k\right\}_{k=1}^\infty$ and phases $\left\{U_k\right\}_{k=1}^\infty$ (see Equation (\ref{amplphasefreq})), and use standard frequency estimation techniques to estimate $\left\{Z_k\right\}_{k=1}^\infty$.

Recall that  $Z_1,Z_2,\dots$ are i.i.d. with probability density function $f$, which is the spectral density function of the process $X$. A density kernel estimate will then yield the desired estimate of $f$. 
As the goal is to estimate the spectral density $f$ alone, we can neglect the estimation of amplitudes $\left\{R_k\right\}_{k=1}^\infty$ and phases $\left\{U_k\right\}_{k=1}^\infty$. 

\subsection{Periodogram method}
Spectral density estimation is a widely studied subject in the field of signal processing, and we will rely on classical techniques provided there \cite{brockwell,fuller,hannan}.
In particular, the periodogram is a standard estimate of the density function $f_\zeta$ of an absolutely continuous spectral measure $\zeta$, often also referred to as power spectral density.
When the spectral measure of a signal is purely discrete, i.e. the signal is produced by sinusoidal waves alone, the periodogram still proves to be a powerful tool for the estimation of frequencies \cite{hannan}. 
Such a sinusoidal signal $X=\left\{X(t):t\in\mathbb{R}\right\}$ is modeled by  
\begin{align}
	\label{sinusoidal}
	X(t)=\mu+\sum\limits_{k=1}^\infty r_k\cos\left(\phi_k+\lambda_kt\right)+Y(t),
\end{align}
where $\mu$ is called the overall mean of the signal and $r_k, \phi_k, \lambda_k$ are the amplitude, phase and frequency of the $k$-th sinusoidal. 
The process $Y=\left\{Y(t):t\in\mathbb{R}\right\}$ is a noise process usually assumed to be a stationary zero mean process with spectral density $f_y$ \cite[p.5]{hannan}.

Note that in the above model the amplitudes, phases and frequencies are deterministic and randomness is introduced by the noise process $Y$. 
For harmonic Gaussian processes it is assumed that the phases are i.i.d. uniformly distributed on $(0,2\pi)$ and the amplitudes are given by $R_k=\vert A_k\vert$, where $A_k$ are complex Gaussian random variables as seen in \cite{slezak}. 
For SRH $S\alpha S$ process these Gaussian random variables are replaced by variance mixtures of Gaussian variables, and, additionally, the frequencies are i.i.d. random variables with density $f$. 
The amplitudes $\left\{R_k\right\}_{k=1}^\infty$ and frequencies $\left\{Z_k\right\}_{k=1}^\infty$ in (\ref{amplphasefreq}) are fixed for a given path of the SRH $S\alpha S$ process $X$. Furthermore, we have $\mu=0$. 
Also, the phases $\left\{U_k\right\}_{k=1}^\infty$ do not play any role in the following computation of the periodogram. 

We define the discrete Fourier transform and the periodogram as follows.
\begin{definition}
	Let $x=\left(x(1),\dots,x(n)\right)\in\mathbb{C}^n$ be a (complex) vector. Then, the transform
	\begin{align*}
		F_n\left(\theta_j\right)=n^{-1}\sum\limits_{k=1}^nx(k)e^{-ik\theta_j}
	\end{align*}
	defines the \emph{discrete Fourier transform} of $x$ at the Fourier frequencies $\{\theta_j\}_{j=1}^n=\left\{2\pi j/n\right\}_{j=1}^n$.
	Furthermore, the \emph{periodogram} of $x$ at the Fourier frequencies $\{\theta_j\}_{j=1}^n$ is defined by 
	\begin{align*}
		I_n\left(\theta_j\right)=\left\vert F_n\left(\theta_j\right)\right\vert^2 = \left\vert n^{-1}\sum\limits_{k=1}^nx(k)e^{-ik\theta_j}\right\vert^2.
	\end{align*}
\end{definition}
Since the discrete Fourier transform can be viewed as a discretization of the continuous time Fourier transform, the definition of the periodogram can simply be extended to arbitrary frequencies by replacing the Fourier frequencies $\theta_j$ with $\theta>0$ in the above. 
The periodogram is a standard tool for the estimation of the spectral density if the spectral measure is absolutely continuous with respect to the Lebesgue measure on $\mathbb{R}$. 
In the case that the spectral measure is purely discrete it can be used to estimate the underlying frequencies of a signal in the following way. 

Let $\left(x(1),\dots x(n)\right)=\left(X(t_1),\dots,X(t_n)\right)$ be a sample generated from the SRH $S\alpha S$ process $X=\left\{X(t):t\in\mathbb{R}\right\}$ at equidistant points $t_j=j\delta$, $\delta>0$, for $j=1,\dots,n$. 
Then, according to Equation (\ref{amplphasefreq}) the sample $x(j)$ is of the form
\begin{align*}
	x(j)=\sum\limits_{l=1}^\infty R_l\cos\left(\Theta_l+j\delta Z_l\right)
\end{align*}
for all $j=1,\dots,n$.
For $\theta>0$, the discrete Fourier transform is given by 
\begin{align}
	\label{periodoidea}
	F_n(\delta\theta)
	&=n^{-1}\sum\limits_{j=1}^nx\left(j\right)e^{-ij\theta\delta}=n^{-1}\sum\limits_{j=1}^n\sum\limits_{l=1}^\infty R_l\cos\left(\Theta_l+j\delta Z_l\right)e^{-ij\theta\delta}\\
	&=n^{-1}\sum\limits_{l=1}^\infty R_l \sum\limits_{j=1}^ne^{-ij\theta\delta}\underbrace{\cos\left(\Theta_l+j\delta Z_l\right)}_{=\frac{e^{i\left(\Theta_l+j\delta Z_l\right)}+e^{-i\left(\Theta_l+j\delta Z_l\right)}}{2}}\nonumber\\
	&
	=n^{-1}\sum\limits_{l=1}^\infty\Bigg( \frac{e^{i\Theta_l}R_l}{2}\sum\limits_{j=1}^ne^{ij\delta\left(Z_l-\theta\right)}
	+\frac{e^{-i\Theta_l}R_l}{2}\sum\limits_{j=1}^ne^{-ij\delta\left(Z_l+\theta\right)}\Bigg).
\end{align}
Taking the squared modulus of the right-hand side expression in Equation (\ref{periodoidea}) yields the periodogram. For the full computation see \ref{appendixA}, and for more details we refer to \cite{hannan}. 

For any fixed $k\in\mathbb{N}$ the periodogram behaves like
\begin{align}
	\label{periodocomp}
	I_n(\delta\theta)
	&=\frac{R_k^2}{4}\Biggl(\underbrace{\frac{\sin^2\left(\frac{n}{2}\delta( Z_k -\theta)\right)}{n^2\sin^2\left(\frac{1}{2}\delta( Z_k -\theta)\right)}}_{(\ast)}
	+\underbrace{\frac{\sin^2\left(\frac{n}{2}\delta( Z_k+\theta)\right)}{n^2\sin^2\left(\frac{1}{2}\delta( Z_k +\theta)\right)}}_{(\ast\ast)}
	\Biggr) + O\left(n^{-2}\right)
\end{align}
as $\theta\rightarrow\vert Z_k\vert$ and $n\rightarrow\infty$.
Analogously to \cite[pp. 35-36]{hannan}, for $\theta>0$ and $Z_k>0$ the second term $(\ast\ast)$ vanishes with rate $O(n^{-2})$ as $n\rightarrow\infty$. Furthermore, the first term $(\ast)$ converges to $1$ as $\theta\rightarrow Z_k$ or to $0$ for all $\theta\not\in\{Z_k\}_{k\in\mathbb{N}}$ as $n\rightarrow\infty$ with rate $O(n^{-2})$  \cite[p. 515]{hannan2}.
On the other hand, for $\theta>0$ and $Z_k<0$ the first term $(\ast)$ in Equation (\ref{periodocomp}) behaves like $O(n^{-2})$ as $n\rightarrow\infty$. Again, the second term $(\ast\ast)$ converges to $1$ as $\theta\rightarrow  Z_k$ and to $0$ for all $\theta\not\in\{Z_k\}_{k\in\mathbb{N}}$ as $n\rightarrow\infty$. 
Therefore, the periodogram $I_n$ shows pronounced peaks close to the absolute values of the true frequencies $\left\{Z_k\right\}$.
The height of the peak at a frequency $Z_k$ is given by $R_k^2/4$. 

Let $R_{[k]}$ be the $k$-th largest amplitude, i.e. $R_{[1]}> R_{[2]}>\dots>0$ a.s., and denote by $Z_{[k]}$ the frequency associated to $R_{[k]}$. 
Taking the location of the $N\in\mathbb{N}$ largest peaks of $I_n$ as estimators for $\{\vert Z_{[k]}\vert\}_{k=1}^N$ might be intuitive but is not feasible, as there are constraints on the minimal distance between the $\vert Z_{[k]}\vert$, see \cite[Equation (5.5) and (5.6)]{walker}. 
Instead, an iterative approach is proposed, see e.g. \cite[Chapter 3.2]{hannan} and \cite[Section 5]{walker}.

Set coefficients $(a_k,b_k)_{k\in\mathbb{N}}$ with
\begin{align*}
	a_k=\left(C_\alpha b_\alpha^{-1}m(\mathbb{R})\right)^{1/\alpha}\Gamma_k^{-1/\alpha}G_k^{(1)},\quad
	b_k=\left(C_\alpha b_\alpha^{-1}m(\mathbb{R})\right)^{1/\alpha}\Gamma_k^{-1/\alpha}G_k^{(2)}
\end{align*}
such that the series representation of X from Proposition \ref{series} is given by $$X(t)=\sum_{k=1}^\infty a_k\cos\left(Z_kt\right)+b_k\sin\left(Z_kt\right).$$
Also, note that $R_k=(a_k^2+b_k^2)^{1/2}$.
We denote by the subscript $[k]$ in $a_{[k]}$ and $b_{[k]}$ their association to $R_{[k]}$. 

Again, consider the sample $(x(1),\dots,x(n))=(X(t_1),\dots,X(t_n))$,
and denote by $I_n=I_n^{(0)}$ the periodogram computed from the sample $x(1),\dots,x(n)$. Determine the estimate $\hat{Z}_{1,n}$ for $\vert Z_{[1]}\vert$ from the location of the largest peak of $I_n^{(0)}$. 
This peak is asymptotically unique, since the amplitudes satisfy $R_{[1]}>R_{[2]}>\dots$ a.s.
Compute estimators $(\hat{a}_{1,n},\hat{b}_{1,n})$ for $(a_{[1]},b_{[1]})$ by regressing $a_{[1]}\cos(t\hat{Z}_{1,n})+b_{[1]}\sin(t\hat{Z}_1)$, $t=1,\dots,n$, on $x(1),\dots,x(n)$. 

Next, compute the periodogram $I_n^{(1)}$ of the residuals $x^{(1)}(t)=x(t)-\hat{a}_{1,n}\cos(t\hat{Z}_{1,n})-\hat{b}_{1,n}\sin(t\hat{Z}_1)$, $t=1,\dots,n$. Determining the location of the largest peak of $I_n^{(1)}$ yields the estimate $\hat{Z}_{2,n}$ for $\vert Z_{[2]}\vert$. As in the first step, compute the regression estimates $(\hat{a}_{[2],n},\hat{b}_{[2],n})$ for $(a_{[2]},b_{[2]})$ and the periodogram $I_n^{(2)}$ from $x^{(2)}(t)=x^{(1)}(t)-\hat{a}_2\cos(t\hat{Z}_{2,n})-\hat{b}_2\sin(t\hat{Z_2})$, $t=1,\dots,n$.  
We repeat this process for fixed $N\in\mathbb{N}$ iterations. 
The choice of $N$ (depending on the sample size $n$) is discussed in Section \ref{numericsperiodogram}.

\begin{thm}
	\label{strongconsistencyZk}
	Let $X=\{X(t):t\in\mathbb{R}\}$ be a SRH $S\alpha S$ process with finite control measure $m$ and symmetric spectral density $f$.
	Furthermore, let $\left(x(1),\dots,x(n)\right)=(X(t_1),\dots,X(t_n))$ be a sample of the process $X$ at equidistant points $t_1<\dots<t_n$, $n\in\mathbb{N}$ with $t_j=j\delta$, $\delta>0$ for $j=1,\dots,n$.
	Denote by $\{\hat{Z}_{k,n}\}_{k=1}^N$ the periodogram frequency estimators for $\{Z_{[k]}\}_{k=1}^N$ as well as by $\{\hat{a}_{k,n}\}_{k=1}^N$ and $\{\hat{b}_{k,n}\}_{k=1}^N$ the regression estimators for $\{{a}_{[k]}\}_{k=1}^N$ and $\{{b}_{[k]}\}_{k=1}^N$, as described above. Then, it holds that 
	\begin{align*}
		\lim_{n\rightarrow\infty} n\left(\vert Z_{[k]}\vert-\hat{Z}_{k,n}\right)=0 \  , \quad 
		\lim_{n\rightarrow\infty}\hat{a}_{k,n}=a_{[k]} \  , \quad
		\lim_{n\rightarrow\infty}\hat{b}_{k,n}=b_{[k]} \quad a.s.
	\end{align*}
	for $k=1,\dots,N$.
\end{thm}
\begin{proof}
	Recall that $\left(\Omega,\mathcal{F},P\right)$ denotes the probability space on which the process $X$ lives. For any $\omega\in\Omega$ we denote the corresponding path of $X$ by
	$X(\omega)=\left\{X(t;\omega):t\in\mathbb{R}\right\}$ and periodogram by $I_n(\theta;\omega)$. 
	From Equation (\ref{periodocomp}) we know that the periodogram $I_n(\delta\theta;\omega)$ converges to $R^2_{[k]}(\omega)/4$ as $\theta\rightarrow\vert Z_{[k]}(\omega)\vert$ and $n\rightarrow\infty$. On the other hand, the periodogram converges to 0 as $n\rightarrow\infty$ for all $\theta\not\in\left\{\vert Z_{[k]}\vert \right\}_{k\in\mathbb{N}}$. For simplicity we can assume $\delta=1$ in the following.
	
	Similar to \cite{hannan, walker} one can first show that $\hat{Z}_{1,n}\rightarrow\vert Z_{[1]}\vert$ a.s. as $n\rightarrow\infty$. To see this, assume that $\hat{Z}_{1,n}(\omega)$ does not converge to $\vert Z_{[1]}(\omega)\vert$ but instead to some $z'\neq \vert Z_{[1]}(\omega)\vert$. We can distinguish between the cases $z'\not\in\{Z_{[k]}(\omega)\}_{k\in\mathbb{N}}$  and $z'=\vert Z_{[k']}(\omega)\vert$ for some $k'>1$. Then, for $n\rightarrow\infty$
	\begin{align*}
		\frac{R^2_{[1]}(\omega)}{4}\stackrel{}{\leftarrow} I_n(\vert Z_{[1]}(\omega)\vert;\omega)\leq  I_n(\hat{Z}_{1,n}(\omega);\omega)\rightarrow
		\begin{cases}
			0 \ ,&z'\not\in\{\vert Z_{[k]}(\omega)\vert\}_{k\in\mathbb{N}},\\
			\frac{R_{[k']}^2(\omega)}{4} \ ,&z'=\vert Z_{[k']}(\omega)\vert, \ k'>1,
		\end{cases}
	\end{align*}
	since $\hat{Z}_{1,n}=\arg\max_{\theta}I_n(\theta;\omega)$ by definition of the estimator. This is a contradiction for almost all $\omega\in\Omega$ as $R_{[1]}>R_{[k']}>0$ a.s. for all $k'>1$. In case that $\hat{Z}_{1,n}(\omega)$ diverges there exists a convergent subsequence $(\hat{Z}_{1,n_l}(\omega))_{l\in\mathbb{N}}$ that converges to an $z'$, which falls again into one of the two above cases, and the same contradiction arises. 
	Hence, we can conclude that $\hat{Z}_{1,n}\rightarrow \vert Z_{[1]}\vert$ a.s. as $n\rightarrow\infty$.
	
	Recall that $I_n(\vert Z_{[1]}(\omega);\omega\vert)=R_{[1]}^2(\omega)/4+O(n^{-2})$. Following the proof of \cite[Theorem 1]{hannan2}, it holds that 
	\begin{align*}
		0&\leq I_n(\hat{Z}_{1,n}(\omega);\omega) - I_n(\vert Z_{[1]}(\omega)\vert;\omega)\\
		&=\frac{R_{[1]}^2(\omega)}{4}\Bigg(\underbrace{\frac{\sin^2(\frac{1}{2}n(\vert Z_{[1]}(\omega)\vert-\hat{Z}_{1,n}(\omega)))}{n^2\sin^2(\frac{1}{2}(\vert Z_{[1]}(\omega)\vert-\hat{Z}_{1,n}(\omega)))}}_{\leq 1}-1\Bigg)+O\left(n^{-2}\right).
	\end{align*}
	Since the first term in the above is non-positive and the second term vanishes as $n\rightarrow\infty$, it follows that 
	$S_n^2(z_N)=\sin^2(nz_n)/(n^2\sin^2(z_n))\rightarrow 1$ as $n\rightarrow\infty$, where $z_n=(\vert Z_{[1]}(\omega)\vert-\hat{Z}_{1,n}(\omega))/2$. 
	The fact that $\sin^2(nz_n)$ is bounded by 1 and the convergence of the function $S_n^2(z_n)$ to $1$ implies that $n^2\sin^2(z_n)$ is also bounded by $1$ for $n$ large enough. 
	It follows that $\sin^2(z_n)\leq n^{-2}$ and hence $\vert z_n\vert\leq n^{-1}$ for $n$ large enough by the convergence $z_n\rightarrow0$. Equivalently, $n\vert z_n\vert \leq 1$ for large $n$, hence $\vert\sin(nz_n)/(nz_n)\vert$ can only converge to $1$ if $nz_n$ converges to $0$ as $n\rightarrow\infty$.  
	Ultimately, we have $n(\hat{Z}_{1,n}-\vert Z_{[1]}\vert)\rightarrow 0$ a.s.

	Next, the almost sure convergence of $\hat{a}_{1,n}$ and $\hat{b}_{1,n}$ to $a_{[1]}$ and $\hat{b}_{[1]}$ is established. Details can be found in \ref{appendixB} and \cite[Section 3]{walker}. 
	The explicit forms of $\hat{a}_{1,n}$ and $\hat{b}_{1,n}$ are given by 
	\begin{align*}
		\hat{a}_{1,n}=\frac{2}{n}\sum_{t=1}^nx(t)\cos(\hat{Z}_{1,n}t),\quad\hat{b}_{1,n}=\frac{2}{n}\sum_{t=1}^nx(t)\sin(\hat{Z}_{1,n}t)
	\end{align*}
	\cite[Equation (2.5)]{walker}. One can show that 
	\begin{align*}
		\left\vert\left(\hat{a}_{1,n}-a_{[1]}\right)+i\left(\hat{b}_{1,n}-b_{[1]}\right)\right\vert
		\leq \frac{2}{n}\left\vert M_n(\hat{Z}_{1,n}-\vert Z_{[1]}\vert)-n\right\vert + O\left(n^{-1}\right),
	\end{align*}
	where $\left\vert M_n(x)\right\vert =\sum_{t=1}^ne^{ixt}=e^{ix}\left(e^{inx}-1\right)/\left(e^{ix}-1\right)$ and $O(n^{-1})$ is a quantity that converges a.s. to 0 as $n\rightarrow\infty$. 
	Applying the mean value theorem and the fact that $\left\vert M_n'(x)\right\vert=\left\vert\sum_{t=1}^nte^{ixt}\right\vert < n^2$ for all $x\in\mathbb{R}$ \cite[p. 27]{walker} yields
	\begin{align*}
		n^{-1}\left\vert M_n(\hat{Z}_{1,n}-\vert Z_{[1]}\vert)-n\right\vert = n^{-1}\left\vert M_n(\hat{Z}_{1,n}-\vert Z_{[1]}\vert )-M_n(0)\right\vert < n\left\vert \hat{Z}_{1,n}-\vert Z_{[1]}\vert\right\vert \rightarrow 0 \quad a.s.
	\end{align*}
	as $n\rightarrow\infty$, which establishes the desired strong consistency of $\hat{a}_{1,n}$ and $\hat{b}_{1,n}$. 
	
	In the next step, compute the periodogram $I_n^{(1)}$ of $x^{(1)}(t)=x(t)-\hat{a}_{1,n}\cos(\hat{Z}_{1,n}t)-\hat{b}_{1,n}\sin(\hat{Z}_{1,n}t)$ and estimate $\hat{Z}_{2,n}$ from the location of the largest peak of $I_n^{(1)}$. As before, one then shows that $n(\hat{Z}_{2,n}-\vert Z_{[2]}\vert)\rightarrow 0$, $\hat{a}_{2,n}\rightarrow a_{[2]}$ and $\hat{b}_{2,n}\rightarrow b_{[2]}$ a.s. as $\rightarrow\infty$ and proceeds by setting $x^{(k)}=x^{(k-1)}(t)-\hat{a}_{k,n}\cos(\hat{Z}_{k,n}t)-\hat{b}_{k,n}\sin(\hat{Z}_{k,n}t)$ to compute $I_n^{(k)}$, establish the strong consistency of $\hat{Z}_{k,n}$ (with rate $n^{-1}$), $\hat{a}_{k,n}$ and $\hat{b}_{k,n}$ for $k=2$ and repeat this process for $k=3,\dots,N-1$. For more details we refer to \cite{hannan, walker} and \ref{appendixB}. 	
\end{proof}

\subsection{Kernel density estimator and weak consistency}
We denote by $\{\hat{Z}_{k,n}\}_{k=1}^N$ the estimators of the absolute frequencies $\{\vert Z_{[k]}\vert\}_{k=1}^N$ corresponding to the $N\in\mathbb{N}$ largest peaks located of the periodogram. 
Recall that frequencies $Z_k$, hence also the $Z_{[k]}$, are drawn independently from a distribution with the sought-after symmetric spectral density $f$ as probability density function. 

\begin{rem}
	\label{remabsZ}
	Let $Z$ be a random variable with symmetric probability density function $f$. Note that the probability density function of $\vert Z\vert$ satisfies the relation $f_{\vert Z\vert}(x)=2f(x)$ for any $x\geq 0$. 
\end{rem}
Hence, applying a kernel density estimator on the estimates $\{\hat{Z}_{k,n}\}_{k=1}^N$ yields an estimate of $2f$. Denote by $\hat{f}_N$ the kernel density estimator with kernel function $\kappa$ and bandwidth $h_N>0$:
\begin{align}
	\label{kdeZ}
	\hat{f}_N(x)=\frac{1}{Nh_N}\sum\limits_{k=1}^N\kappa\left(\frac{x-\vert Z_k\vert}{h_N}\right),
\end{align}
and define the kernel density estimator $\hat f_{N,n}$ based on $\left\{\hat Z_{k,n}\right\}_{k=1}^N$ by 
\begin{align}
	\label{kdeZhat}
	\hat f_{N,n}(x)=\frac{1}{Nh_N}\sum\limits_{k=1}^N\kappa\left(\frac{x-\hat Z_{k,n}}{h_N}\right).
\end{align}

Let $\Vert u\Vert_\infty=\sup_{x\in\mathbb{R}}\vert u(x)\vert$ denote the uniform norm of a function $u$ on $\mathbb{R}$. Furthermore, let $\ \stackrel{P}{\longrightarrow}$ denote convergence in probability and $\ \stackrel{a.s.}{\longrightarrow}$ almost sure convergence.

\begin{thm}
	\label{strongconsistencyfNnforfN}
	Let $X=\left\{X(t):t\in\mathbb{R}\right\}$ be a SRH $S\alpha S$ process with symmetric spectral density $f$,
	and $\left(x(1),\dots,x(n)\right)=\left(X(t_1),\dots,X(t_n)\right)$ be a sample of the process $X$ at equidistant points $t_1<\dots<t_n$, $n\in\mathbb{N}$.
	Furthermore, let $\{\hat{Z}_{k,n}\}_{k=1}^N$, $N\in\mathbb{N}$, be the strongly consistent frequency estimators of $\{\vert Z_{[k]}\vert\}_{k=1}^N$ from Theorem \ref{strongconsistencyZk}.
	Consider the kernel density estimator $\hat{f}_{N,n}$ defined in Equation (\ref{kdeZhat}) with Lipschitz continuous kernel function $\kappa$ and bandwidth $h_N>0$.
	Then, 
	\begin{align}
		\lim_{N\rightarrow\infty}\lim_{n\rightarrow\infty}h_N^2n\left\Vert\hat{f}_{N,n}-\hat{f}_N\right\Vert_\infty=0\quad a.s.
	\end{align}
\end{thm}
\begin{proof}
	Note, that the triangle inequality as well as the Lipschitz continuity of the kernel function $\kappa$ yield
	\begin{align*}
		\left\vert\hat f_{N,n}(x)-\hat{f}_N(x)\right\vert
		&\leq\frac{1}{Nh_N}\sum\limits_{k=1}^N\underbrace{\left\vert\kappa\left(\frac{x-\hat Z_{k,n}}{h_N}\right)-\kappa\left(\frac{x-\vert Z_{[k]}\vert}{h_N}\right)\right\vert}_{\leq L\left\vert\frac{x-\hat Z_{k,n}}{h_N}-\frac{x-\vert Z_{[k]}\vert}{h_N}\right\vert=\frac{L}{h_N}\left\vert \vert Z_{[k]}\vert-\hat Z_{k,n}\right\vert}\nonumber\\
		&\leq\frac{L}{Nh_N^2}\sum\limits_{k=1}^N\left\vert \vert Z_{[k]}\vert-\hat Z_{k,n}\right\vert,
	\end{align*}
	for all $x\in\mathbb{R}$. In particular, the above upper bound is uniform. 
	It  follows that 
	\begin{align}
		\label{strongconspart1}
		&\mathbb{P}\left(\lim_{N\rightarrow\infty}\lim_{n\rightarrow\infty}h_N^2n\left\Vert\hat{f}_{N,n}-\hat{f}_N\right\Vert_\infty>0\right)
		\leq\mathbb{P}\left(\lim_{N\rightarrow\infty}\lim_{n\rightarrow\infty}h_N^2n\frac{L}{Nh_N^2}\sum\limits_{k=1}^N\left\vert \vert Z_{[k]}\vert-\hat Z_{k,n}\right\vert>0\right)\nonumber\\
		=&\mathbb{P}\left(\lim_{N\rightarrow\infty}\lim_{n\rightarrow\infty}\frac{1}{N}\sum\limits_{k=1}^Nn\left\vert \vert Z_{[k]}\vert-\hat Z_{k,n}\right\vert>0\right)
	\end{align}
	The sets 
	$
	A_N=\left\{\lim_{n\rightarrow\infty}\frac{1}{N}\sum\limits_{k=1}^Nn\left\vert \vert Z_{[k]}\vert-\hat Z_{k,n}\right\vert>0\right\}
	$
	are nested with $A_N\subset A_{N+1}$, hence term (\ref{strongconspart1}) simplifies to 
	\begin{align*}
		&\lim_{N\rightarrow\infty}\mathbb{P}\left(\lim_{n\rightarrow\infty}\frac{1}{N}\sum\limits_{k=1}^Nn\left\vert \vert Z_{[k]}\vert-\hat Z_{k,n}\right\vert>0\right)
		=\lim_{N\rightarrow\infty}\mathbb{P}\left(\sum\limits_{k=1}^N\lim_{n\rightarrow\infty}n\left\vert \vert Z_{[k]}\vert-\hat Z_{k,n}\right\vert>0\right)\\
		\leq&\lim_{N\rightarrow\infty}\sum_{k=1}^N\underbrace{\mathbb{P}\left(\lim_{n\rightarrow\infty}n\left\vert \vert Z_{[k]}\vert-\hat Z_{k,n}\right\vert>0\right)}_{=0}=0.
	\end{align*}
\end{proof}	
Kernel density estimators have been studied extensively, and the following results on the consistency of $\hat{f}_N$ as well as the behavior of its bias and variance are well known, see \cite{silverman, kdeconsistency}.
\begin{lem}
	\label{lemmaconsistency}
	Let $h_N\rightarrow0$ with $Nh_N\rightarrow\infty$ as $N\rightarrow\infty$. Then, the kernel density estimator $\hat{f}_N$ is weakly pointwise consistent for $2f$ at every point of continuity $x$ of $f$. It is weakly uniformly consistent if $Nh_N^2\rightarrow\infty$ as $N\rightarrow\infty$. 
	
	If $\kappa$ is a right-continuous kernel function with bounded variation that vanishes at $\pm\infty$, $f$ is uniformly continuous, and the bandwidth satisfies $\sum_{N=1}^\infty\exp(-\gamma Nh_N^2)<\infty$ for any $\gamma>0$, then $\hat{f}_N$ is strongly uniform consistent.
\end{lem}
\begin{lem}
	\label{lemmabiasvar}
	Let $f$ be $r$-times continuously differentiable in a neighborhood of $x$. Let $\lim_{N\rightarrow\infty} Nh_N=\infty$. Then, $Bias(\hat{f}_N)=\mathbb{E}\left[\hat{f}_N(x)-2f(x)\right]=O(h_N^r)$ and $Var(\hat{f}_N(x))=O\left(\frac{1}{Nh_N}\right)$.
\end{lem}

As a direct consequence of Theorem \ref{strongconsistencyfNnforfN} and Lemma \ref{lemmaconsistency} the following consistency results of $\hat{f}_{N,n}$ for $2f$ can be given.  
\begin{cor}
	\label{kdecons}
	Consider the kernel density estimator $\hat{f}_{N,n}$ from Theorem \ref{strongconsistencyfNnforfN}. 
	Then, 
	for all $\varepsilon>0$ it holds that $$\lim_{N\rightarrow\infty}\lim_{n\rightarrow\infty}\mathbb{P}\left(\left\Vert\hat{f}_{N,n}-2f\right\Vert_{\infty}>\varepsilon\right)=0$$
	if $Nh_N^2\rightarrow\infty$ as $N\rightarrow\infty$.
	Under the weaker condition $Nh_N\rightarrow\infty$ as $N\rightarrow\infty$, weak pointwise consistency holds at any continuity point of $f$.
	
	Moreover,  under the conditions for strong uniform consistency of $\hat{f}_N$ in Lemma \ref{lemmaconsistency} it holds that 
	$$\mathbb{P}\left(\lim_{N\rightarrow\infty}\lim_{n\rightarrow\infty}\left\Vert\hat{f}_{N,n}-2f\right\Vert_\infty=0\right)=1.$$
\end{cor}
\begin{proof}
	The triangle inequality yields 
	\begin{align}
		\label{triangleinequalityconsistency}
		\left\Vert\hat{f}_{N,n}-2f\right\Vert_{\infty}\leq\left\Vert\hat{f}_{N,n}-\hat{f}_N\right\Vert_\infty+\left\Vert\hat{f}_N-2f\right\Vert_\infty,
	\end{align} 
	i.e. consistency of $\hat{f}_{N,n}$ for $2f$ is determined by its consistency for $\hat{f}_N$ as well as the consistency of $\hat{f}_N$ for $2f$. By Theorem \ref{strongconsistencyfNnforfN} strong and weak consistency, both pointwise as well uniformly, of $\hat{f}_{N,n}$ for $\hat{f}_N$ are guaranteed. Lemma \ref{lemmaconsistency} provides the conditions for weak pointwise and uniform consistency $\hat{f}_N$ for $2f$, as well as the conditions for strong uniform consistency.
\end{proof}

\begin{rem}
	\label{remconvrates}
	The asymptotic results in Lemma \ref{lemmabiasvar} can be used to derive rates for the pointwise weak consistency of $\hat{f}_N$ for $2f$. 
	The triangle inequality yields 
	\begin{align*}
		\left\vert\hat{f}_N(x)-2f(x)\right\vert\leq\left\vert\hat{f}_N(x)-\mathbb{E}\hat{f}_N(x)\right\vert+\left\vert \mathbb{E}\hat{f}_N(x)-2f(x)\right\vert,
	\end{align*}
	where the second summand on the right-hand side is the bias of $\hat{f}_N$ for $2f$. Applying Chebyshev's inequality to the first summand in the sum above yields 
	\begin{align}
		\label{kdepointwiseweak}
		\hat{f}_N(x)-2f(x)=O_p\left(\sqrt{\frac{1}{Nh_N}}+h_N^r\right)
	\end{align} 
	if $f$ is $r$-times continuously differentiable in a neighborhood of $x$, and the bandwidth $h_N$ satisfies $\lim_{N\rightarrow\infty}Nh_N=\infty$. 
	
	Under stronger assumptions, i.e. if $f$ is bounded, and the bandwidth $h_N\rightarrow0$ satisfies that $h_N/h_{2N}$ is bounded, $-log(h_N)/\log(\log(N))\rightarrow\infty$ and $Nh_N/\log(N)\rightarrow\infty$, then 
	\begin{align*}
		\left\Vert\hat{f}_N-\mathbb{E}\hat{f}_N\right\Vert_\infty=O\left(\sqrt{\frac{-\log(h_N)}{Nh_N}}\right)\quad a.s.,
	\end{align*}
	see \cite{einmahl,gineguillou}. If additionally $f$ is $r$-times continuously differentiable with bounded derivatives $f^{(i)}$, $i=1,\dots,r$, then 
	\begin{align}
		\label{kdeuniformstrong}
		\left\Vert\hat{f}_N-2f(x)\right\Vert_\infty=O\left(\sqrt{\frac{-\log(h_N)}{Nh_N}}+h_N^r\right)\quad a.s.
	\end{align}
\end{rem}

\begin{rem}
	\label{convratesfinal}
	Theorem 4 and Remark \ref{remconvrates} yield the following convergence rates.
	By Theorem \ref{strongconsistencyfNnforfN} it holds that $\Vert\hat{f}_{N,n}-\hat{f}_N\Vert_\infty=o(h_N^{-2}n^{-1})$ almost surely. In particular, this implies
	$\vert \hat{f}_{N,n}(x)-\hat{f}_N(x)\vert=o_p(h_N^{-2}n^{-1})$ 
	and Equation (\ref{kdepointwiseweak}) in Remark \ref{remconvrates} yields
	\begin{align*}
		\left\vert\hat{f}_{N,n}(x)-2f(x)\right\vert\leq\left\vert\hat{f}_{N,n}(x)-\hat{f}_N(x)\right\vert+\left\vert\hat{f}_N(x)-2f(x)\right\vert=o_p\left(h_N^{-2}n^{-1}\right)+O_p\left(\sqrt{\frac{1}{Nh_N}}+h_N^r\right)
	\end{align*}
	for the pointwise weak convergence rate of $\hat{f}_{N,n}(x)$ to $2f(x)$. 
	
	For the uniform strong consistency it holds that
	\begin{align*}
		\left\Vert\hat{f}_{N,n}-2f\right\Vert_{\infty}\leq\left\Vert\hat{f}_{N,n}-\hat{f}_N\right\Vert_\infty+\left\Vert\hat{f}_N-2f\right\Vert_\infty=o\left(h_N^{-2}n^{-1}\right)+ O\left(\sqrt{\frac{-\log(h_N)}{Nh_N}}+h_N^r\right)\quad a.s.
	\end{align*}
	by Equation (\ref{kdeuniformstrong}).
	Bandwidth choice heavily influences the performance of the kernel density estimation. It can be shown that the globally optimal bandwidth that minimizes the mean square error of the kernel density estimator behaves like $O(N^{-1/5})$. More details are given in Section \ref{numericsperiodogram}, in particular Equation (\ref{hopt}). The above convergence rates therefore simplify to 
	\begin{align*}
		\left\vert\hat{f}_{N,n}(x)-2f(x)\right\vert=o_p\left(\frac{N^{2/5}}{n}\right)+O_p\left(N^{-2/5}+N^{-r/5}\right)
	\end{align*}
	and 
	\begin{align*}
		\left\Vert\hat{f}_{N,n}-2f\right\Vert_{\infty}=o\left(\frac{N^{2/5}}{n}\right)+O\left(\sqrt{\frac{\log(N)}{5}}N^{-2/5}+N^{-r/5}\right)\quad a.s.
	\end{align*}
\end{rem}

\section{Numerical results}
\label{numericalanalysis}

In this section, we aim to verify our theoretical results with numerical inference. We will consider the following four examples for the spectral density of a SRH $S\alpha S$ process $X$. 
\begin{ex}
	\label{numexamples}
	We consider the following symmetric probability density functions on $\mathbb{R}$ as examples. 
	\begin{minipage}[t]{.33\textwidth}
		\vspace{-1ex}
		\begin{enumerate}
			\item \label{f1}$f_1(x)=\frac{1}{\sqrt{2\pi}}e^{-\frac{x^2}{2}}$ ,
			\item $f_2(x)=\frac{1}{4}x^2e^{-\vert x\vert}$ ,
		\end{enumerate}
	\end{minipage}%
	\begin{minipage}[t]{.5\textwidth}
		\begin{enumerate}[(a)]
			\setcounter{enumi}{2}
			\item $f_3(x)=\frac{1}{x^2}\mathbbm{1}_{[1,\infty)}(\vert x\vert )$ ,
			\item $f_4(x)=\frac{1}{2}\mathbbm{1}_{[-1,1]}(x)$ .
		\end{enumerate}
	\end{minipage}%
\end{ex}
For the simulation of the SRH $S\alpha S$ process $X$ we make use of the series representation in Proposition \ref{series}, i.e.
\begin{align}
	\label{simulationseries}
	X_N(t)=\left(C_\alpha b_\alpha^{-1}\right)^{1/\alpha}\sum\limits_{k=1}^N\Gamma_{k}^{-1/\alpha}\left(G_k^{(1)}\cos(tZ_k)+G_k^{(2)}\sin(tZ_k)\right),
\end{align}
where $\Gamma_k$ are the arrival times of a unit rate Poisson point process, $G_k^{(i)}$, $i=1,2$, are i.i.d. $N(0,1)$ and $Z_k$ are i.i.d. with probability density function $f_i$, $i=1,\dots,4$ from the example above. 
The constants $C_\alpha$ and $b_{\alpha}$ are given in Proposition \ref{series}.
Note that in the above series representation of $X$ the summation is finite up to $N\in\mathbb{N}$, where we choose $N$ large enough such that $\Gamma_k^{-1/\alpha}$ are negligibly small for $k\geq N$. 

\subsection{Independent paths}
\label{indeppaths}

In the introductory example in Section \ref{multpaths} we considered $L\in\mathbb{N}$ independent path realizations of a SRH $S\alpha S$ process $X$ with symmetric spectral density $f$. 
These paths build the basis for the estimation of the $\alpha$-sine transform of $f$. 
Applying the inversion method of the $\alpha$-sine transform described in \cite{viet} allows us to reconstruct the spectral density. 

For the results in Figure \ref{fig:alphasine} we simulated $L=100$ paths of the process $X$ with index of stability $\alpha=1.5$ and spectral density $f_1$ of Example \ref{numexamples}. 
The paths were sampled at $n=101$ equidistant points $0=t_1<...<t_{n}=T$ on the interval $[0,T]=[0,10]$. 
Across all paths samples of lags $X^{(l)}(t_i)-X^{(l)}(0)\sim S1.5S(\sigma(t_i))$ for $i=1,\dots,n$, $l=1,\dots,L$ were generated. 
The regression-type estimators of Koutrouvelis \cite{koutrouvelis} were used to estimate $\alpha$ and the scale parameters $\sigma(t_i)=\left\Vert X(t_i)-X(0)\right\Vert_\alpha$ of the lags. 
With Equation (\ref{Tf_est}) we get estimates of the $\alpha$-sine transform of $f$ at the points $\{t_i/2\}_{i=1}^n$, and the inversion method in \cite{viet} yields an estimate of the spectral density $f$. 
One might incorporate smoothing for better estimation results, see \cite[Section 6.2.1]{viet}, see Figure \ref{fig:alphasine} (c). 
The performance clearly improves with the increase of the number of paths $L$, the number of sample points $n$ and the sample range $T$, but further analysis was omitted here, since our focus lies on statistics based on a single path of SRH $S\alpha S$ processes. 
\begin{figure}[ht]
	\centering
	\begin{subfigure}{0.33\textwidth}
		\includegraphics[width=\textwidth]{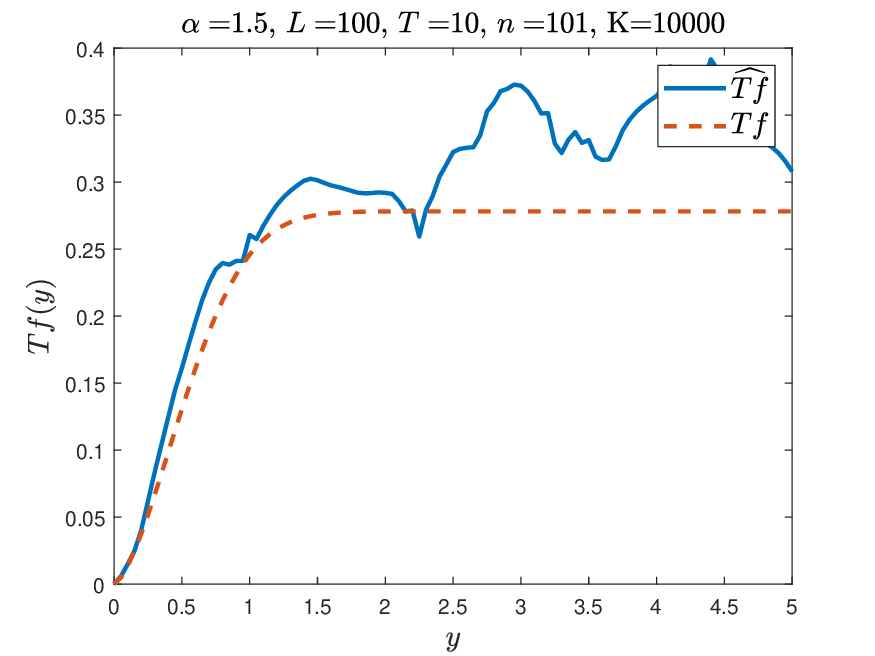}
		\caption{Estimate of the $\alpha$-sine transform \texorpdfstring{$T_\alpha f_1$}{Tf1}}
	\end{subfigure}
	\begin{subfigure}{0.33\textwidth}
		\includegraphics[width=\textwidth]{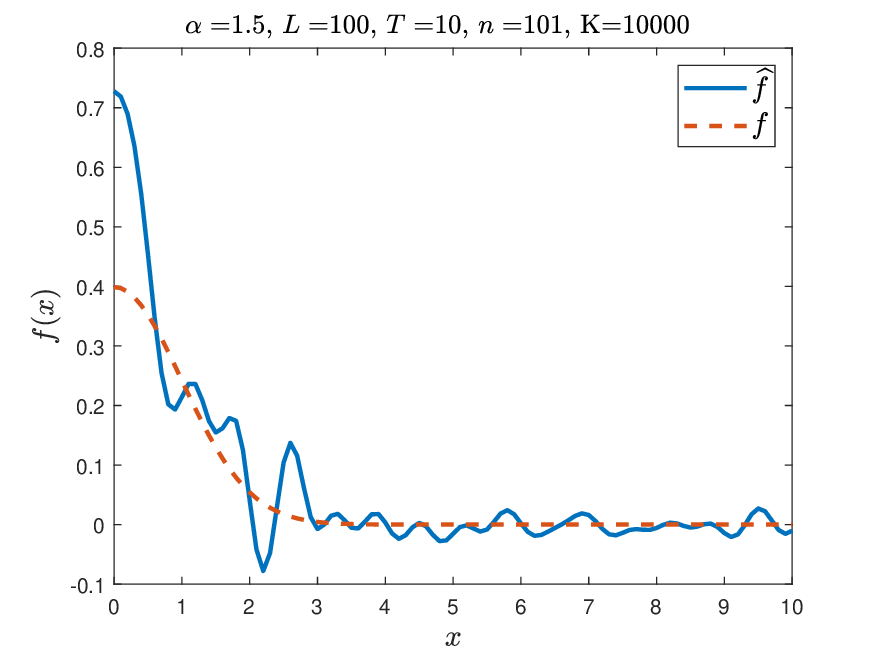}
		\caption{Estimate of the spectral density \texorpdfstring{$f_1$}{f1}.}
	\end{subfigure}
	\begin{subfigure}{0.33\textwidth}
		\includegraphics[width=\textwidth]{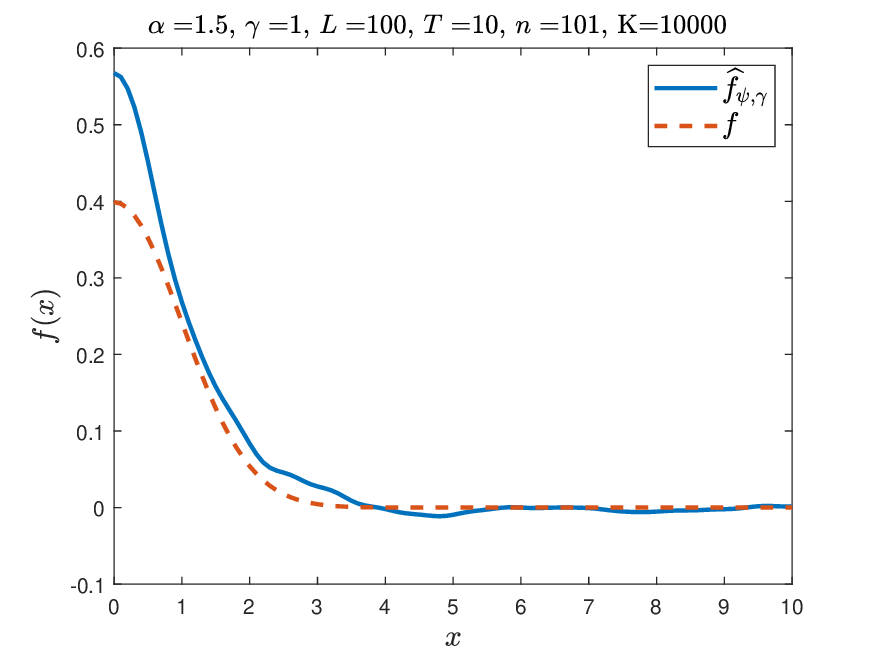}
		\caption{Smoothed estimate of the spectral density \texorpdfstring{$f_1$}{f1}.}
	\end{subfigure}
	\caption{\textbf{Inference on multiple paths}. The solid blue line shows the estimates and the dashed red line shows the theoretical function. 
	}
	\label{fig:alphasine}
\end{figure}

\subsection{Periodogram frequency estimation}
\label{numericsperiodogram}
The periodogram estimate can be jeopardized by errors caused by \emph{aliasing} and \emph{spectral leakage} \cite[Chapters 4 and 10]{signalprocessing}, when both the range $T$ on which the signal is sampled as well as number of sample points $n$ are small. 
Determining all peaks in the periodogram at once proves to be a difficult task. 
Distinguishing between peaks in the periodogram which actually stem from frequencies and not spectral leakage requires a meticulous setup of tuning parameters such as the minimum distance between peaks or their minimum height when employing algorithms such as \texttt{findpeaks} in \texttt{Matlab}.
Instead we utilize an iterative approach to estimate the peak locations from the periodogram as in Theorem \ref{strongconsistencyZk}, see also \cite[Chapter 3.2, p.53]{hannan}. 

The choice of $N$, i.e. the number of iterations, therefore the number of frequencies to be estimated, depends on the sample size $n$ as follows.
The  convergence rate of the kernel density estimator $\hat{f}_{N,n}$ depends on both the sample size $n$ and the number $N$ of estimated frequencies, see Remark \ref{convratesfinal}. In particular, the convergence of the ratio $N^{2/5}/n$ to $0$ is crucial. Since the sample size $n$ is fixed for a given path observation, we chose $N$ such that $N^{2/5}/n\approx\varepsilon$ for some small error $\varepsilon>0$.
The resulting estimate $\hat{Z}_{1,n},\dots,\hat{Z}_{N,n}$ will be used for the kernel density inference of the spectral density $f$.

We use \texttt{Matlab}'s \texttt{findpeaks} function for to detect the peaks and their locations in the periodogram. The specific value of the function's parameter \texttt{MinPeakProminence} is chosen such that the above iteration does not break before delivering $N$ frequency estimates.  

The computation of the periodogram is performed with \emph{zero-padding} \cite[Chapter 8]{signalprocessing}, i.e. zeros are added at the end of the sample resulting in a new sample $(x(1),\dots,x(n),0,\dots,0)$ of length $\tilde{n}>n$. Hence, the discrete Fourier transform is computed on a finer grid $\left\{2\pi j/\tilde{n}\right\}_{j=1}^{\tilde{n}}$ resulting in an interpolation of the periodogram between the actual Fourier frequencies $\left\{2\pi j/n\right\}_{j=1}^n$, which allows us to better distinguish between peaks. Zero-padding has no noticable effect on the computation time of the periodogram estimate. It is practical to choose $\tilde{n}$ to be a power of 2 to make use of the (Cooley-Tukey or radix-2) fast Fourier transform and its reduced complexity of $O(\tilde{n}\log\tilde{n})$ compared to the direct computation of the discrete Fourier transform \cite[Chapter 9]{signalprocessing}.
For our examples the samples are zero-padded with $\tilde{n}=2^{13}=8192$ if $n<\tilde{n}$.

We compute kernel density estimator $\hat{f}_{N,n}$ given in Equation (\ref{kdeZhat}) with the Gaussian kernel $\kappa(x)=e^{-x^2/2}/\sqrt{2\pi}$. 
As for the bandwidth, any fixed bandwidth $h=h_N$ immediately fulfills all the conditions for the consistency results in Lemma \ref{lemmaconsistency}. 
But kernel density estimation is highly sensitive to bandwidth selection, and a poor choice of $h$ naturally leads leads to poor performance of the estimator.
A global optimal bandwidth, which minimizes the mean integrated squared error of the kernel density estimator, i.e. the $\text{MISE}(\hat f_N)=\int_\mathbb{R}\text{MSE}(\hat f_N(x))dx$, where $\text{MSE}(\hat f_N(x))=\mathbb{E}[(\hat f_N(x)-2f(x))^2]$ is given by 
\begin{align}
	\label{hopt}
	h^\ast_N=N^{-1/5}\left(\int_\mathbb{R}\kappa(t)dt\right)^{1/5}\left(\int_{\mathbb{R}}t^2\kappa(t)dt\right)^{-2/5}\left(\int_{\mathbb{R}}2f''(x)dx\right)^{-1/5}=O\left(N^{-1/5}\right).
\end{align}
For the above optimal bandwidth the $\text{MISE}$ and the $\text{MSE}$ are of order $O(N^{-5/4})$, see \cite[Chapter 3.3]{silverman}.

The optimal bandwidth $h^\ast_N$ in Equation (\ref{hopt}) is nice for theoretical purposes but is not applicable in practice due to the simple fact that it depends on the unknown spectral density $f$. 
Instead, many methods to approximate the optimal bandwidth can be found in literature. \emph{Scott's} or \emph{Silverman's rules of thumb} are widespread in practices for their ease of use but assume the sample to be drawn from a Gaussian distribution \cite[Section 3.4]{silverman}. 
These methods are efficient and fast but their accuracy can only be guaranteed in the Gaussian case or for the estimation of unimodal and close to Gaussian densities.
When the form of the sought-after density is unknown, which is usually the case, methods like the \emph{unbiased cross-validation} \cite{bowman, rudemo} or the \emph{Sheather and Jones  plug-in method} \cite{sheather} are far better suited. 
There are many more cross-validation and plug-in methods at hand, see e.g. \cite{sheather2} for an overview, but we applied the Sheather and Jones method as it is already implemented in \texttt{R}. Similar results were achieved with other methods.

We consider one path of a SRH $S1.5S$ process for examples $f_1,\dots,f_4$ in Figure \ref{fig:allfa15}. The path is sampled on the interval $[0,T]$, $T=5000$, at $n=10^4$ equidistant points. 
The number of frequencies in the series representation of $X$ is set to $K=10^4$. 
In Remark \ref{convratesfinal} we derived convergence rates for the kernel density estimator $\hat{f}_{N,n}$. We set the number of estimated frequencies $N$ such that $N^{2/5}/n=\varepsilon$ for some small $\varepsilon>0$. 
Choosing $\varepsilon=10^{-3}$ yields $N= 316$. 

The results for the spectral density estimation are given in Figure \ref{fig:allfa15}.
Similar results were achieved using other well-known kernel functions such as the Epanechnikov kernel or triangle 
kernel. 
\begin{figure} 
	\centering
	\begin{subfigure}{0.245\textwidth}
		\includegraphics[width=\textwidth]{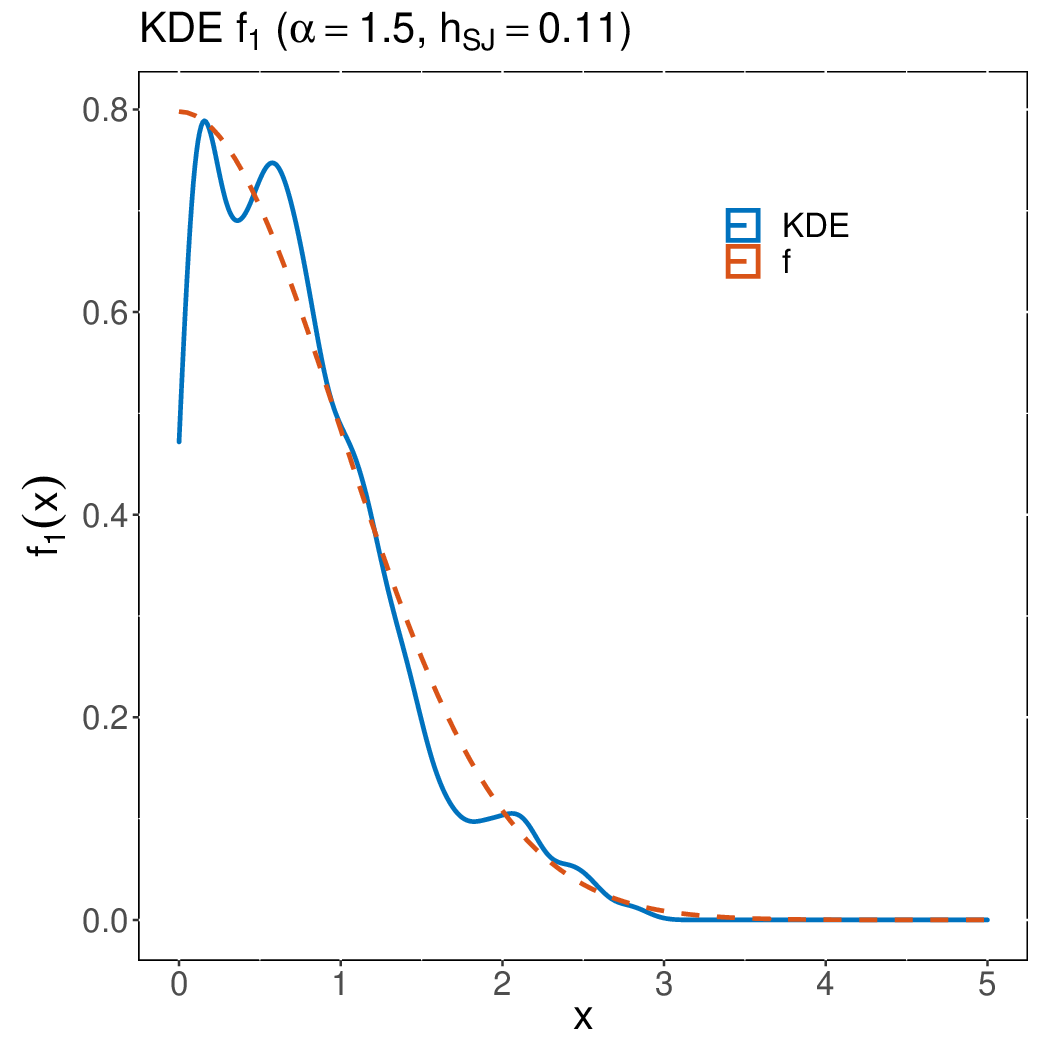}
		\caption{\texorpdfstring{$f_1$}{kdef1}}
	\end{subfigure}
	\begin{subfigure}{0.245\textwidth}
		\includegraphics[width=\textwidth]{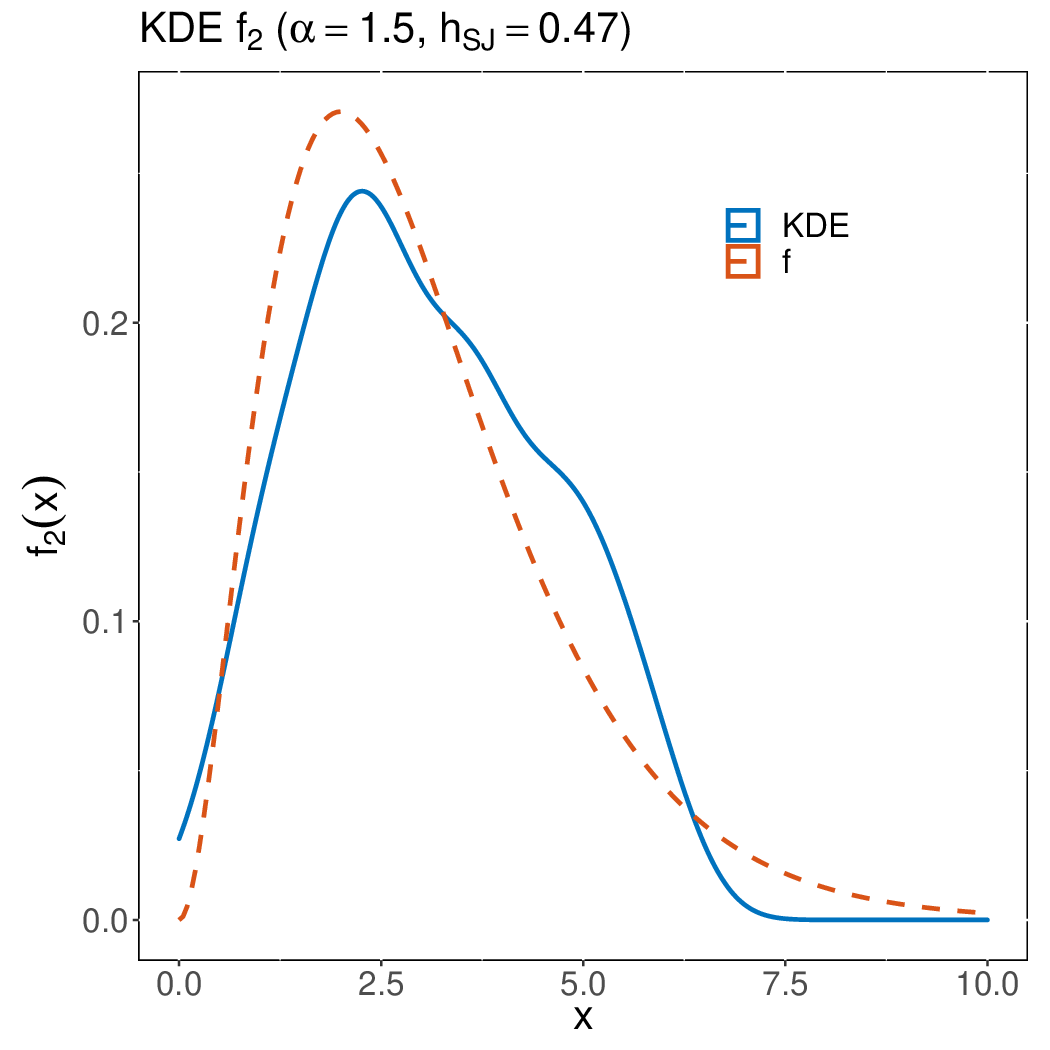}
		\caption{\texorpdfstring{$f_2$}{kdef2}}
	\end{subfigure}
	\begin{subfigure}{0.245\textwidth}
		\includegraphics[width=\textwidth]{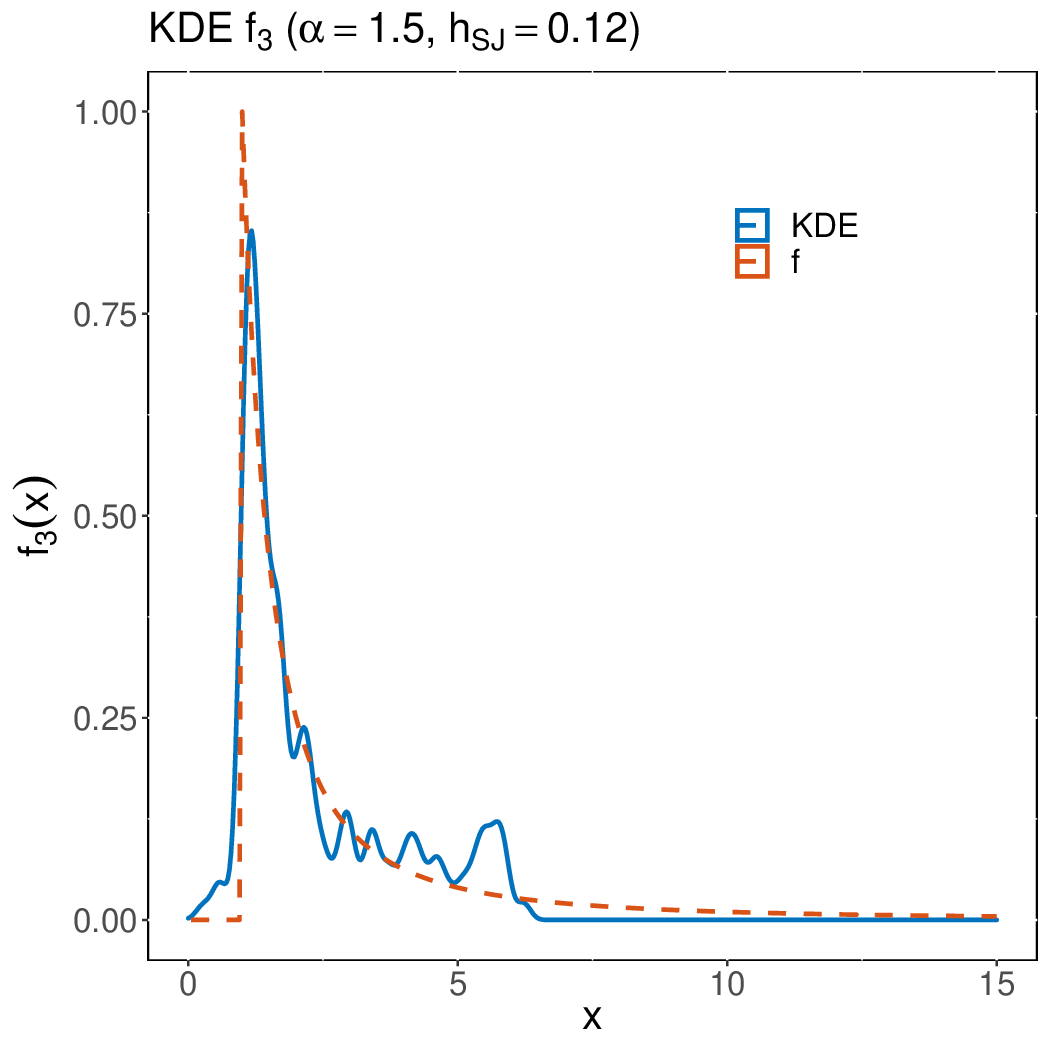}
		\caption{\texorpdfstring{$f_3$}{kdef3}}
	\end{subfigure}
	\begin{subfigure}{0.245\textwidth}
		\includegraphics[width=\textwidth]{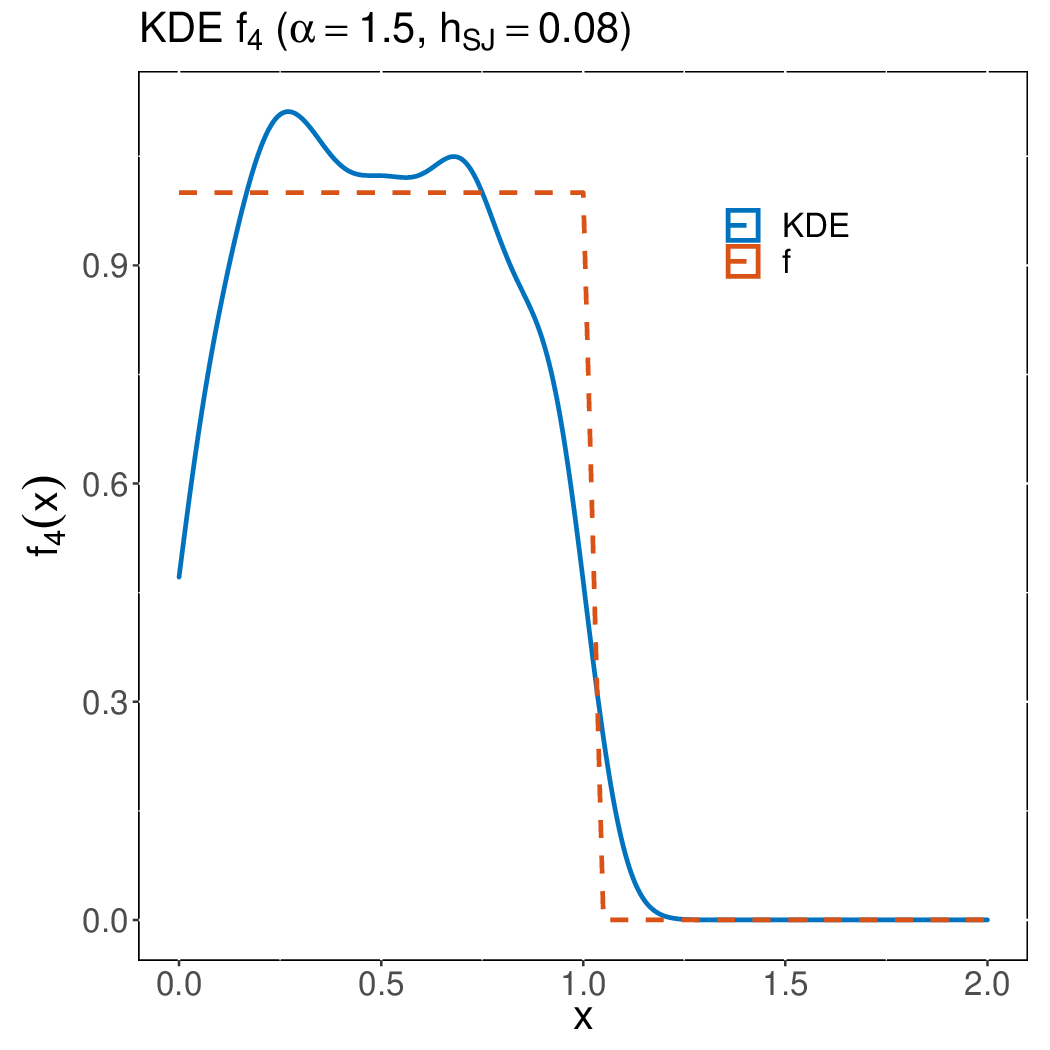}
		\caption{\texorpdfstring{$f_4$}{kdef4}}
	\end{subfigure}
	\caption{\textbf{Spectral density estimation} (Gaussian kernel, Sheather-Jones bandwidth). Kernel density estimator (blue) against spectral density (red) for examples $f_1,\dots,f_4$ with index of stability $\alpha=1.5$, sample range $T=5000$, sample size $n=10^4$ and number of estimated frequencies $N=316$.}
	\label{fig:allfa15}
\end{figure}
Figure \ref{fig:f2otheralphas} illustrates our results for the estimation of example function $f_2$ in the cases $\alpha\in\{1.75,1.25,0.75,0.25\}$. 
As expected, the estimation becomes more difficult as the index of stability $\alpha$ decreases. 
Recall that the decay of the amplitudes $R_k$ in the series representation of the SRH $S\alpha S$ process $X$ in Equation (\ref{amplphasefreq}) is determined by the factors $\Gamma_k^{-1/\alpha}$.
The smaller $\alpha$, the faster $\Gamma_k^{-1/\alpha}$ tends to $0$, which directly translates to the amplitudes $R_k$. 
Since the $R_k$ decay much faster it becomes increasingly difficult to estimate the corresponding frequencies $Z_k$ with smaller values of $\alpha$. 
In Figure \ref{fig:f2otheralphas} (d) we can see a dominating frequency, most likely associated to the largest amplitude $R_{[1]}\gg R_{[k]}$, $k\geq 2$. 
The periodogram will oscillate heavily in a neighbourhood of that frequency such that all other frequencies with amplitudes $R_k\ll R_{[1]}$ are hard to detect. 
Larger sample sizes $n$, which allow for larger $N$ while maintaining the same error bound $\varepsilon$, as well as the application of smoothing window functions lead to better results for small $\alpha$.
\begin{figure} 
	\centering
	\begin{subfigure}{0.245\textwidth}
		\includegraphics[width=\textwidth]{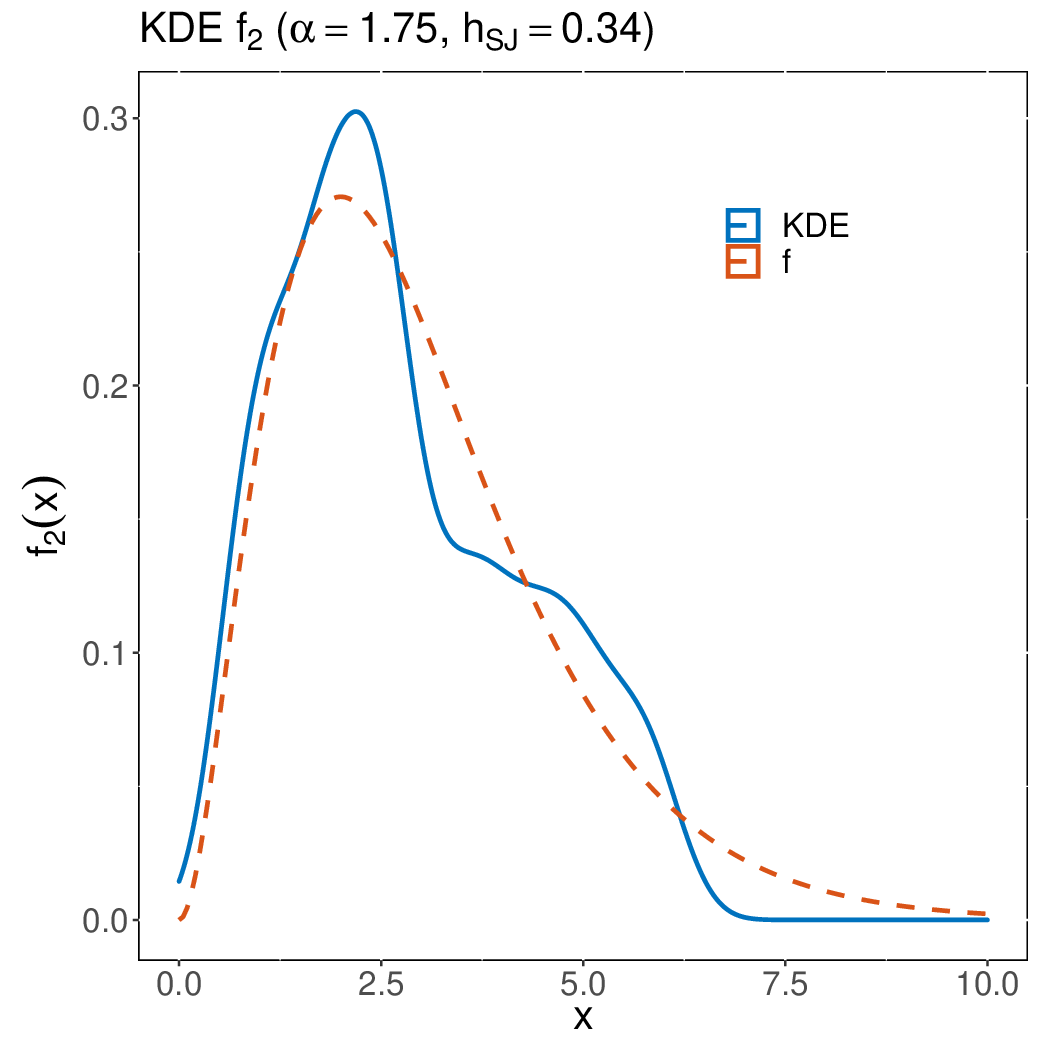}
		\caption{\texorpdfstring{$\alpha=1.75$}{kdealpha125}}
	\end{subfigure}
	\begin{subfigure}{0.245\textwidth}
		\includegraphics[width=\textwidth]{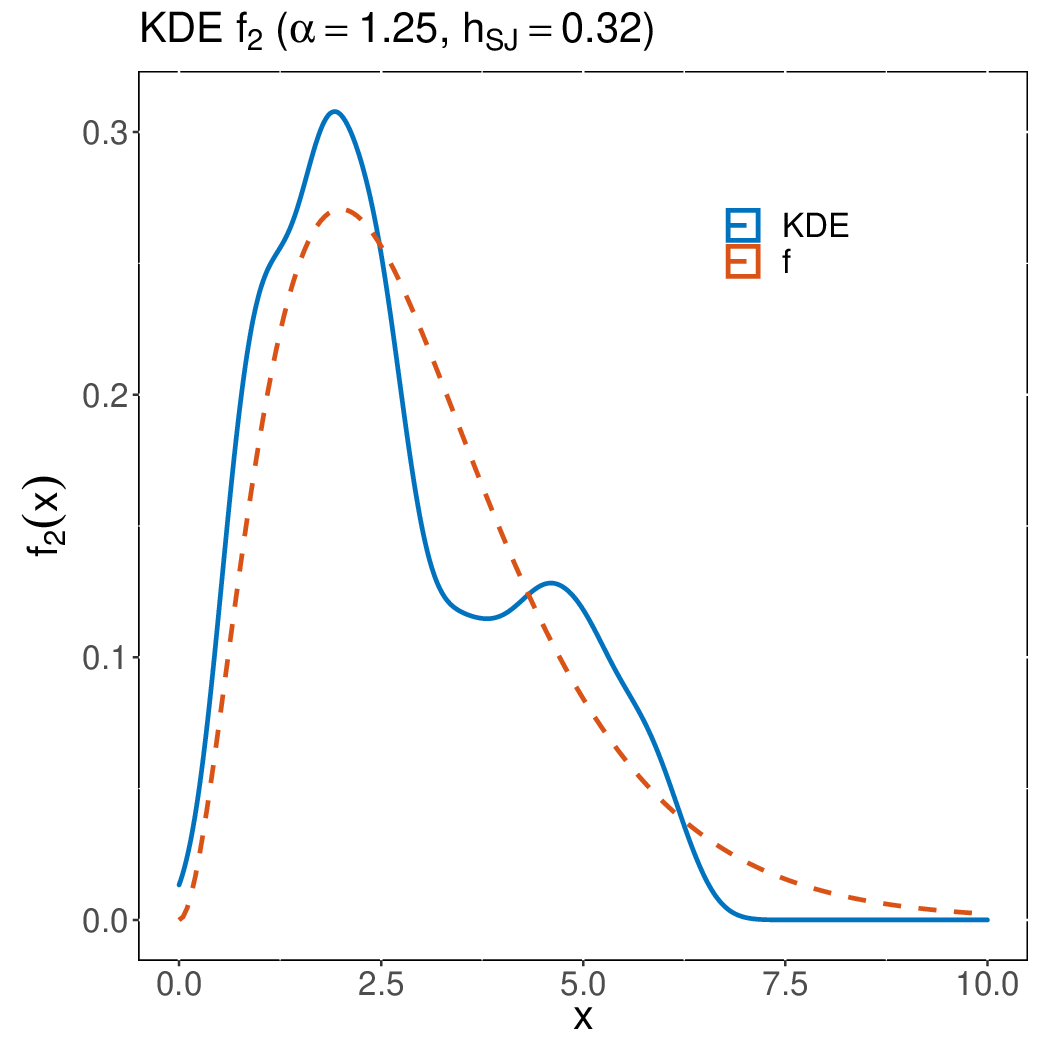}
		\caption{\texorpdfstring{$\alpha=1.25$}{kdealpha175}}
	\end{subfigure}
	\begin{subfigure}{0.245\textwidth}
		\includegraphics[width=\textwidth]{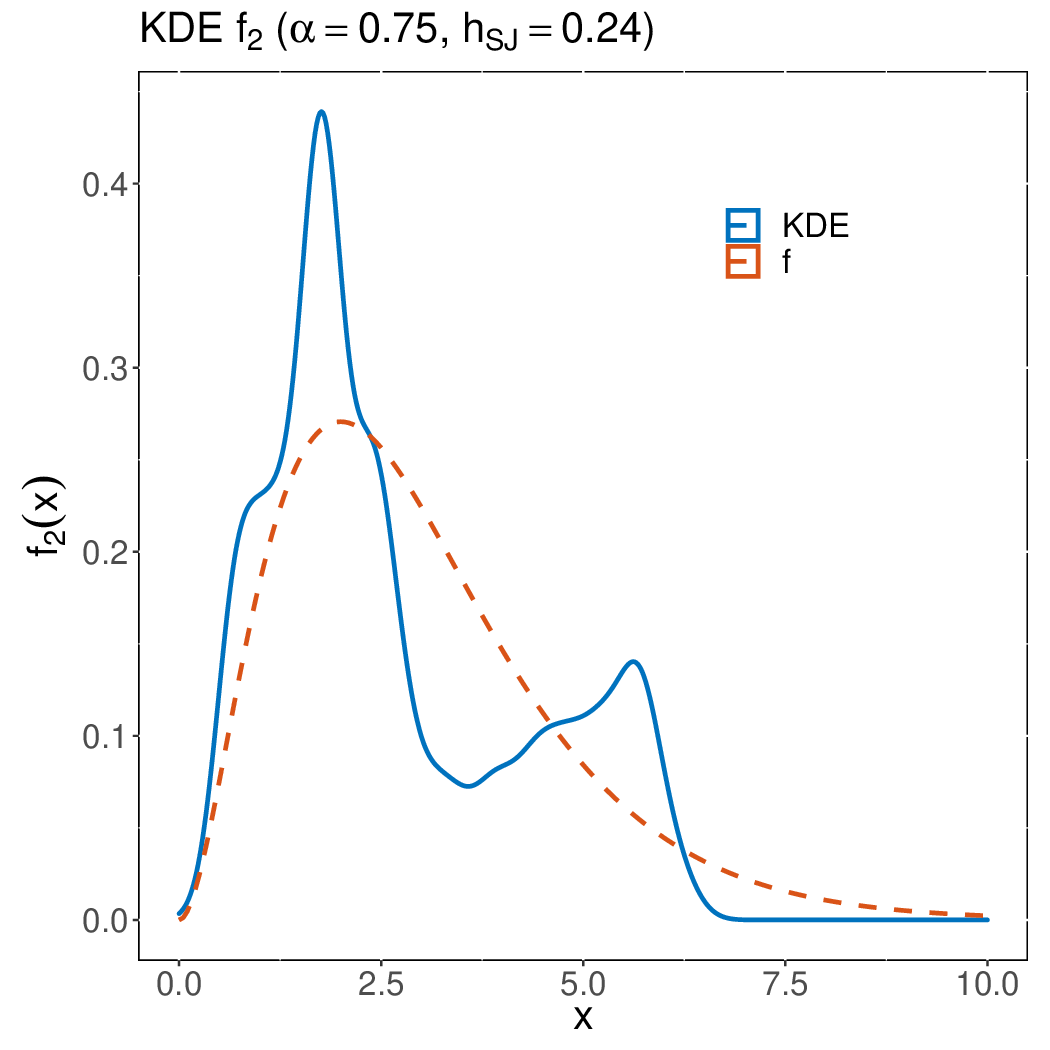}
		\caption{\texorpdfstring{$\alpha=0.75$}{kdealpha075}}
	\end{subfigure}
	\begin{subfigure}{0.245\textwidth}
		\includegraphics[width=\textwidth]{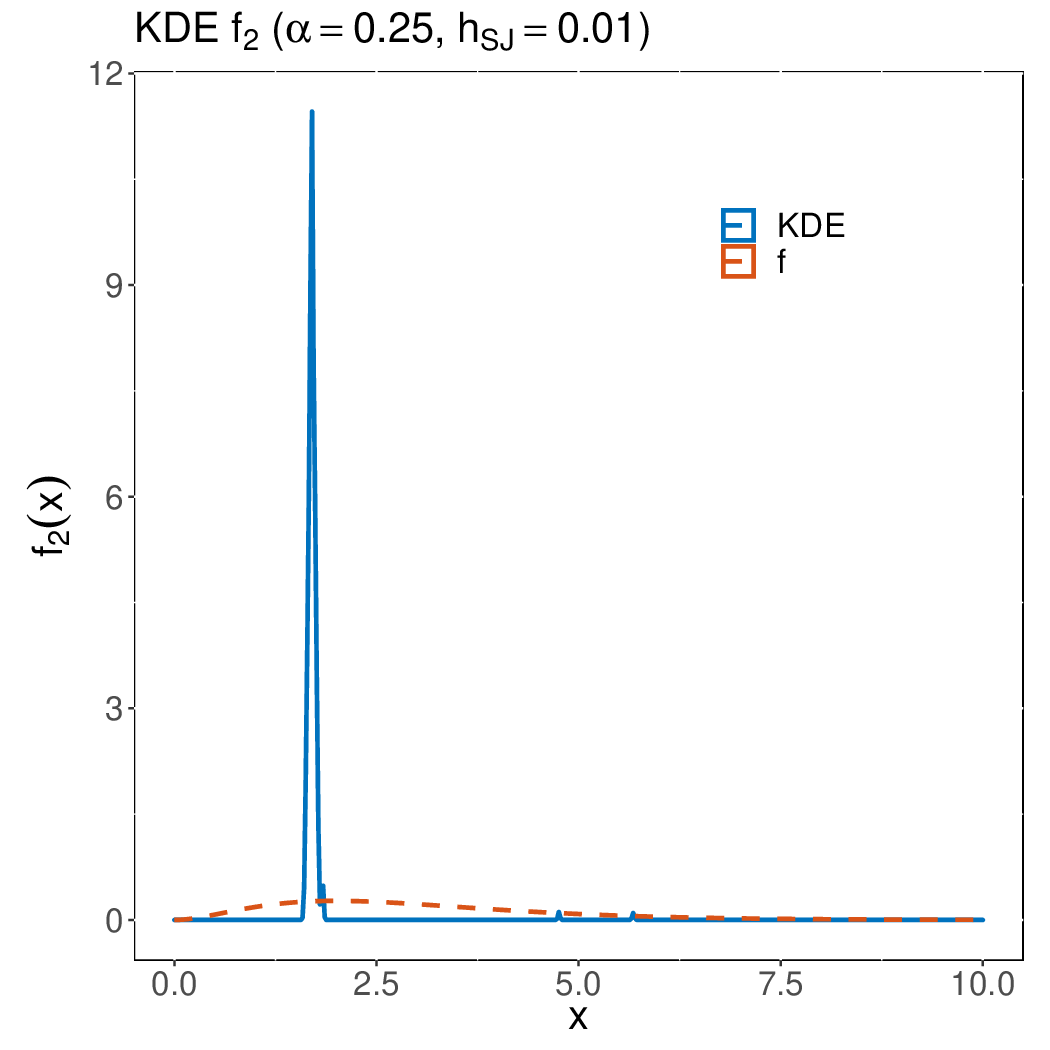}
		\caption{\texorpdfstring{$\alpha=0.25$}{kdealpha025}}
	\end{subfigure}
	\caption{\textbf{Spectral density estimation} (Gaussian kernel, Sheather-Jones bandwidth). 
		Kernel density estimator (blue) against spectral density (red) for example $f_2$ with index of stability $\alpha\in\{1.75,1.25,0.75,0.25\}$, sample range $T=5000$, sample size $n=10^4$ and number of estimated frequencies $N=316$.}
	\label{fig:f2otheralphas}
\end{figure}

One thing we would like to mention is the the dip of the kernel density estimates near $0$ in examples $f_1$ and $f_4$, see Figures \ref{fig:allfa15} (a) and (d). These dips are a direct consequence of a commonly known problem in frequency estimation, i.e. very low frequencies that are close to $0$ are difficult to detect via the periodogram. This can be resolved by an increase of the sampling range $T$  and sample size $n$. For more details we refer to \cite[Section 3.5]{hannan}.
\begin{table}
	\renewcommand{\arraystretch}{1.25}
	\setlength{\tabcolsep}{3pt}
	\begin{tabular}{|c|c||ccc|ccc|ccc|}
		\hline
		\multirow{2}{*}{Mean $L^2$-dist. $\times 10^{-1}$}&$n$&\multicolumn{3}{|c|}{$10^3$}&\multicolumn{3}{|c|}{$5\cdot10^3$}&\multicolumn{3}{|c|}{$10^4$}\\
		&$N$&10&20&50&50&75&100&100&200&300\\
		\hline
		\multirow{4}{*}{$\alpha=0.75$}&$f_1$&3.66 & 2.74 & 2.13&3.29 & 2.98 & 2.77&3.92 & 3.19 & 2.79\\
		&$f_2$&2.26 & 1.63 & 1.19&2.11 & 1.86 & 1.71&2.58 & 2.10 & 1.87\\
		&$f_3$&6.79 & 6.99 & 7.15&7.32 & 7.22 & 7.13&7.46 & 7.14 & 6.96\\
		&$f_4$&4.42 & 3.37 & 2.41&4.13 & 3.70 & 3.35&4.85 & 3.84 & 3.23\\
		\hline
		\multirow{4}{*}{$\alpha=1.5$}&$f_1$&2.49&1.85&1.67&1.65 & 1.48 & 1.38&1.49 & 1.25 & 1.17\\
		&$f_2$&1.54&1.07&0.69&0.85 & 0.71 & 0.63&0.72 & 0.55 & 0.48\\
		&$f_3$&6.31 &6.58& 6.98&6.68 & 6.56 & 6.46&6.54 & 6.27 & 6.23\\
		&$f_4$&2.85 &2.14 &1.77&2.08 & 1.87 & 1.75&1.92 & 1.61 & 1.45\\
		\hline
	\end{tabular}
	\caption{Mean $L^2$-distances between kernel density estimators and spectral densities $f_1,\dots,f_4$ based on  $L=1000$ single path simulations and spectral density estimation for each path, respectively. For example $f_1$ the $L^2$-distance is evaluated on the interval $[0,5]$, for $f_2$ on $[0,10]$, for $f_3$ on $[0,15]$ and for $f_4$ on $[0,2]$.}
	\label{tab:comparison}
\end{table}

Table \ref{tab:comparison} gives an overview of the mean $L^2$-distance between the kernel density estimator $\hat{f}_{N,n}$ and the true spectral density $f_i$.
For example $f_1$ the $L^2$-distance is evaluated on the interval $[0,5]$, for $f_2$ on $[0,10]$, for $f_3$ on $[0,15]$ and for $f_4$ on $[0,2]$.
For each combination of example $f_i$, index of stability $\alpha$, sample size $n$ and number of estimated frequencies $N$,  $L=1000$ single paths were simulated. From each of those paths a spectral density estimate and the corresponding $L^2$-distance to the true spectral density is computed.  
We clearly observe that the estimators perform better the larger the index of stability $\alpha$ is. 
This is to be expected as already mentioned before. 

Within a fixed sample size $n$ we also see that the $L^2$-distance between the estimators and the respective spectral density decreases with increasing $N$. 
Additionally, for $\alpha=1.5$ the results also improve with increasing sample size $n$. 

In the case of $\alpha=0.75$ we see a decrease in accuracy for examples $f_1$, $f_3$ and $f_4$ comparing the same number of estimated frequencies $N=100$ for both $n=5\cdot10^3$ and $n=10^4$. 
The reason is that for small $\alpha$ a small number frequencies with relatively large amplitudes dominate all other frequencies, due to the fast decay of $\Gamma_k^{-1/\alpha}$. 
In combination with larger sample sizes this leads to oscillations in the periodogram around these dominating frequencies which are not completely filtered out by the iteration process and overshadow frequencies with smaller amplitudes.

Other issues that can lead to poorer estimation results are discontinuities, see $f_3$ and $f_4$, as well as spectral densities which generate frequencies close to $0$ that are difficult to detect by the periodogram, see $f_1$ and $f_4$. 

Possible solutions to the aforementioned issues might be smoothing of the periodogram using appropriate window functions as well as adaptive bandwidth methods with narrower bandwidths at discontinuities and wider bandwidths where the kernel density estimator oscillates. 

Also note that, out of all examples, estimation results for $f_2$ are the most accurate due to it vanishing at the origin such that no frequencies close to $0$ need to be estimated, and its smoothness, which has a direct effect on the convergence rate of the kernel density estimator, see Remark \ref{convratesfinal}. 

\section{Conclusion}

Harmonizable $S\alpha S$ processes are one of the three main classes of stationary $S\alpha S$ processes. 
Unlike stationary moving average $S\alpha S$ processes, the harmonizable case has not received much attention from a statistical point of view. 
This is mainly due to the non-ergodicity of these processes, which inhibits the application of standard empirical methods. 

We considered the special case of stationary real harmonizable $S\alpha S$, in which the circular control measure of the process is the product of the uniform probability measure on the unit circle $(0,2\pi)$ and a finite control measure $m$.
Assuming that the control measure $m$ has a symmetric density $f$, the goal of our work was to develop a consistent and efficient statistical procedure to estimate $f$.

The series representation in Proposition \ref{series} shows that a SRH $S\alpha S$ process $X$ is a conditional non-ergodic harmonic Gaussian process. 
In Theorem \ref{nonerglim} and Remark \ref{lagprocess} we derived the non-ergodic limits of the empirical characteristic function of $X$ and the lag process $\left\{X(t+h)-X(t):t\in\mathbb{R}\right\}$, $h>0$. 
These limits can be explicitly given in terms of the Bessel function of the first kind of order $0$ and the processes' invariant sets.

Additionally, Theorem \ref{thmamplphasefreq} also yields an equivalent series representation of $X$ in terms of amplitudes $\left\{R_k\right\}_{k=1}^\infty$, i.i.d. uniform phases $\left\{\Theta_k\right\}_{k=1}^\infty$ and frequencies $\left\{Z_k\right\}_{k=1}^\infty$, see Equation (\ref{amplphasefreq}). 
The frequencies $Z_k$ are i.i.d. with probability density function $f$.
Although these quantities are random, they are predetermined for a given path. 

In signal theory, amplitudes, phases and frequencies are usually assumed to be deterministic, and randomness is added by some stationary ergodic noise sequence. In the case of SRH $S\alpha S$ processes, randomness is introduced by the random amplitudes, phases and frequencies themselves. Furthermore, path observations are non-ergodic. 
The methods we employ for frequency estimation are not new but have not been applied in this context before. We show that consistency is given since the frequencies to be estimated form an i.i.d. sample.

The periodogram is a fast and efficient tool for the estimation of the absolute frequencies $\vert Z_k\vert$ as it relies on the fast Fourier algorithm.
We show that the frequency estimators are strongly consistent in Theorem \ref{strongconsistencyZk}. Applying kernel density estimation yields an estimate of the spectral density. 
Under minimal assumptions on the kernel function, the kernel density's bandwidth and the spectral density $f$, various consistency results for our spectral density estimator are proven in Theorem \ref{strongconsistencyfNnforfN} and Corollary \ref{kdecons}. Convergence rates are given in Remark \ref{convratesfinal}.

An extensive numerical analysis shows that our proposed estimation method performs well on a variety of examples with index of stability $\alpha>0.5$. 
The smaller the index of stability $\alpha$ is, the more difficult the estimation of the spectral density becomes as peaks in the periodogram are harder to detect. 
In general, an increase of the sampling range $T$ and sample size $n$ results in significant improvements of the estimation.
Problems might arise when the spectral density has discontinuities or does not vanish at $0$, as seen in examples $f_1$, $f_3$ and $f_4$. 
Further improvements can be achieved with the application of different window functions in the periodogram computation as well as adaptive bandwidth methods for the kernel density estimation. 

Ultimately, aside from the consistency of our spectral density estimator and its ease of computation, we would like to highlight that no other requirements or prior knowledge on the process (such as e.g. the index of stability $\alpha$) is needed for the estimation of the spectral density $f$.  
The \texttt{Matlab} and \texttt{R} implementations used in this paper can be found in \cite{code}.


\appendix




 \bibliographystyle{model4-names}

\bibliography{bibliography}

\section{Periodogram computation}
\label{appendixA}

Recall the discrete Fourier transform of the sample $x(1),\dots,x(n)$ for $\theta>0$ in Equation (11), i.e.
\begin{align*}
	F_n(\theta\delta)&
	=n^{-1}\sum\limits_{l=1}^\infty\Bigg( \frac{e^{i\Theta_l}R_l}{2}\underbrace{\sum\limits_{j=1}^ne^{ij\delta\left(Z_l-\theta\right)}}_{=S_l^{(1)}(\theta,\delta)}
	+\frac{e^{-i\Theta_l}R_l}{2}\underbrace{\sum\limits_{j=1}^ne^{-ij\delta\left(Z_l+\theta\right)}}_{=S_l^{(2)}(\theta,\delta)}\Bigg),
\end{align*}
where the sums $S_l^{(1)}$ and $S_l^{(2)}$ can be explicitly expressed by 
\begin{align*}
	S_l^{(1)}(\theta,\delta)=e^{i\delta\left(Z_l-\theta\right)}\frac{e^{in\delta\left(Z_l-\theta\right)}-1}{e^{i\delta\left(Z_l-\theta\right)}-1}\quad\text{and}\quad S_l^{(2)}(\theta,\delta)=e^{-in\delta\left(Z_l-\theta\right)}\frac{e^{in\delta\left(Z_l+\theta\right)}-1}{e^{i\delta\left(Z_l+\theta\right)}-1}.
\end{align*}

The periodogram $I_n\left(\theta\right)$ is defined as the squared absolute value of the discrete Fourier transform. Note that $\left\vert x+y\right\vert^2 = \vert x\vert ^2+\vert y \vert ^2+x^\ast y + xy^\ast=\vert x\vert ^2+\vert y \vert ^2+2Re(x^\ast y)$, where $x^\ast$ denotes the complex conjugate of $x$.
This yields
\begin{align*}
	I\left(\theta\right)&=\Bigg\vert n^{-1}\sum\limits_{l=1}^\infty \Bigg( \frac{e^{i\Theta_l}R_l}{2}S_l^{(1)}(\theta,\delta)
	+\frac{e^{-i\Theta_l}R_l}{2}S_l^{(2)}(\theta,\delta)\Bigg)\Bigg\vert^2	\\
	&=n^{-2}\sum\limits_{l=1}^\infty\frac{R_l^2}{4}\left[\left\vert S_l^{(1)}(\theta,\delta)\right\vert^2+\left\vert S_l^{(2)}(\theta,\delta)\right\vert^2 +2 Re\left(\left(e^{i\Theta_l}S_l^{(1)}(\theta,\delta)\right)^\ast e^{-i\Theta_l}S_l^{(2)}(\theta,\delta)\right)\right]\\
	&+n^{-2}\sum\limits_{l=1}^\infty\sum\limits_{m>l}\frac{R_lR_m}{2}Re\left(\left(e^{i\Theta_l}S_l^{(1)}(\theta,\delta)+e^{-i\Theta_l}S_l^{(2)}(\theta,\delta)\right)^\ast\left(e^{i\Theta_m}S_m^{(1)}(\theta,\delta)+e^{-i\Theta_m}S_m^{(2)}(\theta,\delta)\right)\right)\\
	&=(\ast_1)+(\ast_2).
\end{align*}

For ease of notation we assume $\delta=1$ and define the function $S_n(x)=\sin\left(\frac{n}{2}x\right)/\sin\left(\frac{1}{2}x\right)$.
In the following we will not further denote the functions dependence on $n\in\mathbb{N}$ with a subscript and just write $I=I_n$ and $S=S_n$ instead.
The function $S$ is well-defined for all $x\in\mathbb{R}$, in particular it holds that $S(0)=n$, hence it follows that $\lim_{n\rightarrow\infty}n^{-1}S(x)=\pm 1$ for any $x= j\pi$, $j\in\mathbb{Z}$, and $0$ otherwise. 

Note that $\left\vert e^{ix}-1\right\vert^2=\left(\cos(x)-1\right)^2+\sin^2(x)=2-2\cos(x)=4\sin^2\left(\frac{x}{2}\right)$ for all $x\in\mathbb{R}$, hence we can compute 
\begin{align*}
	\left\vert S_l^{(1)}(\theta,1)\right\vert^2=\frac{\sin^2\left(\frac{n}{2}(Z_l-\theta)\right)}{\sin^2\left(\frac{1}{2}(Z_l-\theta)\right)}=S^2\left(Z_l-\theta\right),
	\quad
	\left\vert S_l^{(2)}(\theta,1)\right\vert^2 = \frac{\sin^2\left(\frac{n}{2}(Z_l+\theta)\right)}{\sin^2\left(\frac{1}{2}(Z_l+\theta)\right)}=S^2\left(Z_l+\theta\right)
\end{align*}
for all $l\in\mathbb{N}$.
Furthermore, it holds that  
\begin{align*}
	\left(e^{i\Theta_l}S_l^{(1)}(\theta,1)\right)^\ast \left(e^{-i\Theta_l}S_l^{(2)}(\theta,1)\right)=e^{-i2\Theta_l}\left(S_l^{(1)}\left(\theta,1\right)\right)^\ast S_l^{(2)}\left(\theta,1\right)
\end{align*}
with
\begin{align*}
	\left(S_l^{(1)}\left(\theta,1\right)\right)^\ast S_l^{(2)}\left(\theta,1\right)
	=S(Z_l-\theta)S(Z_l+\theta)\cos\left((n+1)Z_l\right)-iS(Z_l-\theta)S(Z_l+\theta)\sin\left((n+1)Z_l\right).
\end{align*}
Hence,
\begin{align}
	\label{mixedprodS1S2}
	&Re\left(e^{-i2\Theta_l}\left(S_l^{(1)}\left(\theta,1\right)\right)^\ast S_l^{(2)}\left(\theta,1\right)\right)\nonumber\\
	=&Re\left(e^{-i2\Theta_l}\right)Re\left(\left(S_l^{(1)}\left(\theta,1\right)\right)^\ast S_l^{(2)}\left(\theta,1\right)\right)-Im\left(e^{-i2\Theta_l}\right)Im\left(\left(S_l^{(1)}\left(\theta,1\right)\right)^\ast S_l^{(2)}\left(\theta,1\right)\right)\nonumber\\
	=&S(Z_l-\theta)S(Z_l+\theta)\left(\cos(2\Theta_l)\cos\left((n+1)Z_l\right)-\sin(2\Theta_l)\sin\left((n+1)Z_l\right)\right)\nonumber\\
	=&S(Z_l-\theta)S(Z_l+\theta)\cos\left(2\Theta_l+(n+1)Z_l\right).
\end{align}
Therefore, 
\begin{align*}
	(\ast_1)= n^{-2}\sum\limits_{l=1}^\infty \frac{R_l^2}{4} \left[S^2(Z_l-\theta)+S^2(Z_l+\theta)+2\cos\left(2\Theta_l+(n+1)Z_l\right)S(Z_l-\theta)S(Z_l+\theta)\right].
\end{align*}
For the second sum $(\ast_2)$ first compute 
\begin{align*}
	&\left(e^{i\Theta_l}S_l^{(1)}(\theta,\delta)+e^{-i\Theta_l}S_l^{(2)}(\theta,\delta)\right)^\ast\left(e^{i\Theta_m}S_m^{(1)}(\theta,\delta)+e^{-i\Theta_m}S_m^{(2)}(\theta,\delta)\right)\\
	=&e^{i(-\Theta_l+\Theta_m)}\left(S_l^{(1)}\left(\theta,1\right)\right)^\ast S_m^{(1)}\left(\theta,1\right)
	+e^{i(-\Theta_l-\Theta_m)}\left(S_l^{(1)}\left(\theta,1\right)\right)^\ast S_m^{(2)}\left(\theta,1\right)	\\
	&+e^{i(\Theta_l+\Theta_m)}\left(S_l^{(2)}\left(\theta,1\right)\right)^\ast S_l^{(1)}\left(\theta,1\right)
	+e^{i(\Theta_l-\Theta_m)}\left(S_l^{(2)}\left(\theta,1\right)\right)^\ast S_m^{(2)}\left(\theta,1\right).
\end{align*}
The real part of the above sum is the sum of the real part of each summand, and similar to $(\ast1)$ we can compute
\begin{align*}
	&Re\left(\left(e^{i\Theta_l}S_l^{(1)}(\theta,\delta)+e^{-i\Theta_l}S_l^{(2)}(\theta,\delta)\right)^\ast\left(e^{i\Theta_m}S_m^{(1)}(\theta,\delta)+e^{-i\Theta_m}S_m^{(2)}(\theta,\delta)\right)\right)\\
	=&S(Z_l-\theta)S(Z_m-\theta)\cos\left(-\Theta_l+\Theta_m+\frac{1}{2}(n+1)(-Z_l+Z_m)\right)\\
	&+S(Z_l-\theta)S(Z_m+\theta)\cos\left(-\Theta_l-\Theta_m+\frac{1}{2}(n+1)(-Z_l-Z_m)\right)\\
	&+S(Z_l+\theta)S(Z_m-\theta)\cos\left(\Theta_l+\Theta_m+\frac{1}{2}(n+1)(Z_l+Z_m)\right)\\
	&+S(Z_l+\theta)S(Z_m+\theta)\cos\left(\Theta_l-\Theta_m+\frac{1}{2}(n+1)(Z_l-Z_m)\right)\\		=&\left(S(Z_l-\theta)S(Z_m-\theta)+S(Z_l+\theta)S(Z_m+\theta)\right)\cos\left(\Theta_l-\Theta_m+\frac{1}{2}(n+1)(Z_l-Z_m)\right)\\
	&+\left(S(Z_l-\theta)S(Z_m+\theta)+S(Z_l+\theta)S(Z_m-\theta)\right)\cos\left(\Theta_l+\Theta_m+\frac{1}{2}(n+1)(Z_l+Z_m)\right).
\end{align*}
It follows that 
\begin{align*}
	(\ast_2)&=n^{-2}\sum\limits_{l=1}^\infty\sum\limits_{m>l}\frac{R_lR_m}{2}\\
	&\times\left[\left(S(Z_l-\theta)S(Z_m-\theta)+S(Z_l+\theta)S(Z_m+\theta)\right)\cos\left(\Theta_l-\Theta_m+\frac{1}{2}(n+1)(Z_l-Z_m)\right)\right.\\
	&\left.+\left(S(Z_l-\theta)S(Z_m+\theta)+S(Z_l+\theta)S(Z_m-\theta)\right)\cos\left(\Theta_l+\Theta_m+\frac{1}{2}(n+1)(Z_l+Z_m)\right)\right].
\end{align*}
Note that one can easily verify that the frequencies $Z_l$ and $Z_m$ inside the function $S$ can be replaced by their absolute values $\vert Z_l\vert$ and $\vert Z_m\vert$.
Ultimately, we get
\begin{align}
	\label{periodogramexact}
	I(\theta)&=n^{-2}\sum\limits_{l=1}^\infty \frac{R_l^2}{4}\left[S^2(\vert Z_l\vert-\theta)+S^2(\vert Z_l\vert+\theta)+2\cos\left(2\Theta_l+(n+1) Z_l\right)S(\vert Z_l\vert-\theta)S(\vert Z_l\vert+\theta)\right]\nonumber\\
	&+n^{-2}\sum\limits_{l=1}^\infty\sum\limits_{m>l}\frac{R_lR_m}{2}\nonumber\\
	&\times\left[\left(S(\vert Z_l\vert-\theta)S(\vert Z_m\vert-\theta)+S(\vert Z_l\vert+\theta)S(\vert Z_m\vert+\theta)\right)\cos\left(\Theta_l-\Theta_m+\frac{1}{2}(n+1)( Z_l-Z_m)\right)\right.\nonumber\\
	&+\left.\left(S(\vert Z_l\vert-\theta)S(\vert Z_m\vert+\theta)+S(\vert Z_l\vert+\theta)S(\vert Z_m\vert-\theta)\right)\cos\left(\Theta_l+\Theta_m+\frac{1}{2}(n+1)(Z_l+Z_m)\right)\right].
\end{align}

The periodogram's convergence to $0$ for all $\theta\not\in\{Z_k\}_{k\in\mathbb{N}}$ with rate $O(n^{-2})$ as $n\rightarrow\infty$, except for when $\theta\rightarrow\vert Z_k\vert$ for some $k\in\mathbb{N}$, can be readily established by Lebesgue's dominated convergence theorem and the fact that $n^{-1}\vert S(x)\vert$ is bounded by $1$.

\section{Computations in the proof of Theorem 4}
\label{appendixB}

\vspace{3ex}
\begin{enumerate}[1.]
	
	\item \emph{Show that $\hat{Z}_{1,n}\rightarrow \vert Z_{[1]}\vert $ a.s. as $n\rightarrow\infty.$}
	
	\begin{enumerate}[(i)]
		\item Assume that $\hat{Z}_{1,n}(\omega)\rightarrow\vert Z_{[j]}(\omega)\vert$ as $n\rightarrow\infty$ for some $j>1$, $\omega\in\Omega$. Since $\hat{Z}_{1,n}(\omega)$ is the location of the largest peak of the periodogram $I_n(\theta;\omega)$ it follows that 
		\begin{align*}
			\frac{R^2_{[j]}(\omega)}{4}\leftarrow I_n(\hat{Z}_{1,n}(\omega);\omega)\geq I_n(\vert Z_{[1]}(\omega)\vert;\omega)\rightarrow\frac{R^2_{[1]}(\omega)}{4},
		\end{align*}
		which is a contradiction  since $R_{[1]}(\omega)>R_{[j]}(\omega)$ for all $j>1$ and almost all $\omega\in\Omega$. 
		
		\item Assume that $\hat{Z}_{1,n}(\omega)\rightarrow z'\not\in\left\{\vert Z_{[j]}(\omega)\vert\right\}_{j\in\mathbb{N}}$ as $n\rightarrow\infty$ and $\omega\in\Omega$. Then, 
		\begin{align*}
			0\leftarrow I_n(\hat{Z}_{1,n}(\omega);\omega)\geq I_n(\vert Z_{[1]}(\omega)\vert;\omega)\rightarrow\frac{R^2_{[1]}(\omega)}{4},
		\end{align*}
		which is again a contradiction, since $R_{[1]}(\omega)>0$ for almost all $\omega\in\Omega$.
		
		\item Assume $\hat{Z}_{1,n}(\omega)$ does not converge. Then, there exists a converging subsequence $(\hat{Z}_{1,n_l}(\omega))_{l\in\mathbb{N}}$ (since $(\hat{Z}_{1,n}(\omega))_{n\in\mathbb{N}}$ is bounded) with $\hat{Z}_{1,n_l}(\omega)\rightarrow z'\neq \vert Z_{[1]}(\omega)\vert$
		\begin{align*}
			\left.\begin{aligned}
				z'\not\in\{\vert Z_{[j]}\},\quad&0\\
				z'=\vert Z_{[j]}\vert, \ j>1,\quad&R^2_{[j]}/4\\
			\end{aligned}\right\}
			\leftarrow I_{n_l}(\hat{Z}_{1,n_l})\geq I_{n_l}(\vert Z_{[1]}\vert)\rightarrow\frac{R^2_{[1]}}{4}
		\end{align*}
	\end{enumerate}
	Hence, $\hat{Z}_{1,n}\rightarrow \vert Z_{[1]}\vert $ a.s.

	\item \emph{Show that $n(\hat{Z}_{1,n}-\vert Z_{[1]}\vert)\rightarrow 0$ a.s. as $n\rightarrow\infty.$}
	
	
	Follows from $\hat{Z}_{1,n}\rightarrow\vert Z_{[1]}\vert$ a.s. as $n\rightarrow\infty$ since for any $\omega\in\Omega$
	\begin{align*}
		0&\leq I_n(\hat{Z}_{1,n}(\omega);\omega)-I_n(\vert Z_{[1]}(\omega)\vert;\omega)\\
		&=\frac{R_{[1]}^2(\omega)}{4}\underbrace{\Bigg(\underbrace{\frac{\sin^2\left(\frac{1}{2}n(\vert Z_{[1]}(\omega)\vert-\hat{Z}_{1,n}(\omega))\right)}{n^2\sin^2\left(\frac{1}{2}(\vert Z_{[1]}(\omega)\vert-\hat{Z}_{1,n}(\omega))\right)}}_{\leq 1}-1\Bigg)}_{\leq 0}+O\left(n^{-2}\right).
	\end{align*}
	Since the first term in the above is non-positive and the second term vanishes as $n\rightarrow\infty$ but the whole expression is non-negative, it follows that 
	\begin{align*}
		\frac{R_{[1]}^2(\omega)}{4}\Bigg(\frac{\sin^2\left(\frac{1}{2}n(\vert Z_{[1]}(\omega)\vert-\hat{Z}_{1,n}(\omega))\right)}{n^2\sin^2\left(\frac{1}{2}(\vert Z_{[1]}(\omega)\vert-\hat{Z}_{1,n}(\omega))\right)}-1\Bigg)\rightarrow 0 
	\end{align*}
	as $n\rightarrow\infty$ for any $\omega\in\Omega$. Consequently 
	\begin{align*}
		\frac{\sin^2\left(\frac{1}{2}n(\vert Z_{[1]}(\omega)\vert-\hat{Z}_{1,n}(\omega))\right)}{n^2\sin^2\left(\frac{1}{2}(\vert Z_{[1]}(\omega)\vert-\hat{Z}_{1,n}(\omega))\right)}\rightarrow 1
	\end{align*}
	as $n\rightarrow\infty$, which is equivalent to $n(\hat{Z}_{1,n}-\vert Z_{[1]}\vert)\rightarrow 0$ as $n\rightarrow\infty$ for any $\omega\in\Omega$. 
	
	\item \emph{Show that the least-squares estimators $(\hat{a}_{1,n},\hat{b}_{1,n})$ from the regression of $a_1\cos(t\hat{Z}_{1,n})+b_1\sin(t\hat{Z}_1)$, $t=1,\dots,n$, on $x(1),\dots,x(n)$ converge a.s., i.e. 
		\begin{align*}
			\hat{a}_{1,n}\rightarrow a_{[1]},\qquad\hat{b}_{1,n}\rightarrow b_{[1]} \quad a.s.
		\end{align*}
		as $n\rightarrow\infty$, where $a_{[k]},b_{[k]}$ are the coefficients in the cosine-sine series representation of $X$, ordered according to the order of the descending $R_{[k]}$, i.e.
		\begin{align*}
			X(t)=\sum_{k=1}^\infty a_{[k]}\cos(Z_{[k]}t)+b_{[k]}\sin(Z_{[k]}t)=\sum_{k=1}^\infty A_{[k]}e^{iZ_{[k]}t}+A_{[k]}^\ast e^{-iZ_{[k]}t}
		\end{align*}
		with
		\begin{align*}
			a_{[k]}=C_\alpha\Gamma^{-1/\alpha}_{[k]}G^{(1)}_{[k]},\quad b_{[k]}= C_\alpha\Gamma^{-1/\alpha}_{[k]}G^{(2)}_{[k]},\quad A_{[k]}=\frac{a_{[k]}-ib_{[k]}}{2}.
		\end{align*}
		For details see Paper, Proof of Theorem 2, in particular Equations (8) and (9).
	}
	
	Proof according to \cite[Section 3]{walker}. 
	
	The explicit forms of $\hat{a}$ and $\hat{b}$ are given by 
	\begin{align*}
		\hat{a}_{1,n}=\frac{2}{n}\sum_{t=1}^nx(t)\cos(\hat{Z}_{1,n}t),\quad\hat{b}_{1,n}=\frac{2}{n}\sum_{t=1}^nx(t)\sin(\hat{Z}_{1,n}t).
	\end{align*}
	We can write
	\begin{align*}
		\hat{a}_{1,n}+i\hat{b}_{1,n}&=\frac{2}{n}\sum_{t=1}^nx(t)\left(\cos(\hat{Z}_{1,n}t)+i\sin(\hat{Z}_{1,n}t)\right)\\
		&=\frac{2}{n}\sum_{t=1}^nx(t)e^{i\hat{Z}_{1,n}t}\\
		&=\frac{2}{n}\sum_{t=1}^n\left(\sum_{k=1}^\infty a_{[k]}\cos(Z_{[k]}t)+b_{[k]}\sin(Z_{[k]}t)\right)e^{i\hat{Z}_{1,n}t}\\
		&=\frac{2}{n}\sum_{t=1}^n\left(a_{[1]}\cos(Z_{[k]}t)+b_{[1]}\sin(Z_{[1]}t)\right)e^{i\hat{Z}_{1,n}t}\\
		& \ \ +\frac{2}{n}\sum_{t=1}^n\left(\sum_{k=2}^\infty a_{[k]}\cos(Z_{[k]}t)+b_{[k]}\sin(Z_{[k]}t)\right)e^{i\hat{Z}_{1,n}t}\\
		&=\frac{2}{n}\sum_{t=1}^n\left(A_{[1]}e^{iZ_{[1]}t}+A^{\ast}_{[1]}e^{-iZ_{[1]}t}\right)e^{i\hat{Z}_{1,n}t}\\
		& \ \ +\frac{2}{n}\sum_{t=1}^n\left(\sum_{k=2}^\infty A_{[k]}e^{iZ_{[k]}t}+A^{\ast}_{[k]}e^{-iZ_{[k]}t}\right)e^{i\hat{Z}_{1,n}t}\\
		&=\frac{2}{n}\sum_{t=1}^n\left(A_{[1]}e^{i(\hat{Z}_{1,n}+Z_{[1]})t}+A^{\ast}_{[1]}e^{i(\hat{Z}_{1,n}-Z_{[1]})t}\right) + \frac{2}{n}\sum_{t=1}^n\left(\sum_{k=2}^\infty \dots\right)e^{i\hat{Z}_{1,n}t}
	\end{align*}
	It follows that 
	\begin{align*}
		&\left(\hat{a}_{1,n}-a_{[1]}\right)+i\left(\hat{b}_{1,n}-b_{[1]}\right)\\
		&=\hat{a}_{1,n}+i\hat{b}_{1,n}-\left(a_{[1]}+ib_{[1]}\right)\\
		&=\frac{2}{n}\sum_{t=1}^n \Bigg(x(t)e^{i\hat{Z}_{1,n}}-\underbrace{\frac{a_{[1]}+ib_{[1]}}{2}}_{=A^\ast_{[1]}}\Bigg)\\
		&=\frac{2}{n}\sum_{t=1}^n \left(A_{[1]}e^{i(\hat{Z}_{1,n}+Z_{[1]})t}+A^{\ast}_{[1]}e^{i(\hat{Z}_{1,n}-Z_{[1]})t}-A^\ast_{[1]}\right) +\frac{2}{n}\sum_{t=1}^n\left(\sum_{k=2}^\infty \dots\right)e^{i\hat{Z}_{1,n}t}.
	\end{align*}
	Recall that $$M_n(x):=\sum_{t=1}^ne^{ixt}=e^{ix}\frac{e^{inx}-1}{e^{ix}-1}
	,$$
	hence by the triangle inequality
	\begin{align*}
		&\left\vert\left(\hat{a}_{1,n}-a_{[1]}\right)+i\left(\hat{b}_{1,n}-b_{[1]}\right)\right\vert\\
		&\leq \frac{2}{n}\left\vert A_{[1]}\right\vert\left(\left\vert M_n(\hat{Z}_{1,n}+Z_{[1]})\right\vert+\left\vert M_n(\hat{Z}_{1,n}-Z_{[1]})-n\right\vert\right)+\frac{2}{n}\left\vert\sum_{t=1}^n\left(\sum_{k=2}^\infty \dots\right)e^{i\hat{Z}_{1,n}t}\right\vert.
	\end{align*}
	We can replace $Z_{[1]}$ by $\vert Z_{[1]}\vert$ in the above (for the case $Z_{[1]}<0$ simply consider $\left\vert M_n(\hat{Z}_{1,n}-Z_{[1]})\right\vert+\left\vert M_n(\hat{Z}_{1,n}+Z_{[1]})-n\right\vert$ instead). 
	Then, first of all
	\begin{align*}
		n^{-1}\left\vert M_n(\hat{Z}_{1,n}+\vert Z_{[1]}\vert)\right\vert=\frac{\sin\left(\frac{n}{2}(\hat{Z}_{1,n}+\vert Z_{[1]})\right)}{n\sin\left(\frac{1}{2}(\hat{Z}_{1,n}+\vert Z_{[1]})\right)}\rightarrow 0\quad a.s.
	\end{align*}
	as $n\rightarrow\infty$. As for the second term, applying the mean value theorem and the fact that $\left\vert M_n'(x)\right\vert=\left\vert\sum_{t=1}^nte^{ixt}\right\vert < n^2$ for all $x\in\mathbb{R}$ \cite[p. 27]{walker} yields
	\begin{align*}
		n^{-1}\left\vert M_n(\hat{Z}_{1,n}-\vert Z_{[1]}\vert)-n\right\vert = n^{-1}\left\vert M_n(\hat{Z}_{1,n}-\vert Z_{[1]}\vert)-M_n(0)\right\vert < \text{const} \ n\left\vert \hat{Z}_{1,n}-\vert Z_{[1]}\vert\right\vert \rightarrow 0 \quad a.s.
	\end{align*}
	as $n\rightarrow\infty$. For the third term, note that 
	\begin{align*}
		\frac{1}{n}\sum_{t=1}^n\sum_{k=2}^{\infty}\left(A_{[k]}e^{iZ_{[k]}t}+A^{\ast}_{[k]}e^{-iZ_{[k]}t}\right)e^{i\hat{Z}_{1,n}t}=\sqrt{\tilde{I}_n(\hat{Z}_{1,n})}
	\end{align*}
	is the square root of the periodogram of a sample of a harmonic process with frequencies $\{Z_{[k]}\}_{k\geq 2}$. Since $\hat{Z}_{1,n}\rightarrow\vert Z_{[1]}\vert$ a.s. and $\vert Z_{[1]}\vert\not\in\{Z_{[k]}\}_{k\geq 2}$ a.s. it follows that $\tilde{I}_n(\hat{Z}_{1,n})\rightarrow 0$ a.s. Consequently, we have $\sqrt{\tilde{I}_n(\hat{Z}_{1,n})}\rightarrow 0$ a.s. by the continuous mapping theorem. 
	
	%
\end{enumerate}

\end{document}